\setlist[enumerate,1]{label=(\arabic*),ref=\arabic*$^\circ$}
\newcounter{eq}
\newcounter{eqintr}
\newtheorem{prop}[eq]{Proposition}
\newtheorem{thm}[eq]{Theorem}
\newtheorem{cor}[eq]{Corollary}
\newtheorem{lemma}[eq]{Lemma}
\theoremstyle{definition}
\newtheorem{df}[eq]{Definition}
\newtheorem*{example}{Example}
\newtheorem{notation}[eq]{Notation}
\theoremstyle{remark}
\newtheorem{rem}[eq]{Remark}
\numberwithin{eq}{section}
\DeclareMathOperator{\SheafHom}{\mathcal{H\kern -3pt o\kern -2pt m\kern -1pt}}
\newcommand{\A}{\mathbb A}
\newcommand{\Z}{\mathbb Z}
\address{\parbox{\linewidth}{
Department of Mathematics and Mechanics, Lomonosov Moscow State University, Moscow 119234, Russia
}}
\email{dmitrii.badulin@math.msu.ru, dbadulin28@gmail.com}
\let\oldtocsection=\tocsection
\let\oldtocsubsection=\tocsubsection
\let\oldtocsubsubsection=\tocsubsubsection
\renewcommand{\tocsection}[2]{\hspace{0em}\oldtocsection{#1}{#2}}
\renewcommand{\tocsubsection}[2]{\hspace{1em}\oldtocsubsection{#1}{#2}}
\renewcommand{\tocsubsubsection}[2]{\hspace{2em}\oldtocsubsubsection{#1}{#2}}
\begin{document}
\author{
Dmitry Badulin
}

\title{Structure of ind-pro completions of Noetherian rings
}
\date{}

\maketitle

\begin{abstract}
We prove some results on the structure of  
ind-pro completions of Noetherian rings along flags of prime ideals. In particular, we compute the Krull dimension and deduce the criterion on semilocality in the case of essentially of finite type algebras over a field. We also show that ind-pro completion inherits properties of the base ring such as normality, regularity, local equidimensionality, etc. 
\end{abstract}

\section*{Introduction}
In this paper, we examine properties of rings $C_\Delta R$, where $R$ is a Noetherian ring and $\Delta$ is a flag of prime ideals, i.e. ind-pro completions of Noetherian rings along flags of prime ideals, see section \ref{sect1} for definitions. Such rings appear as local factors of adelic groups, see \cite{Hu}, \cite{Be}, \cite{Pa}, \cite{Os}, \cite{Mo}. The rings $C_\Delta R$ are also known as {\it Beilinson completions}, see \cite{Ye}.

In \cite[Section 3.3]{Ye}, it was shown that $C_\Delta R$ is a finite product of $n$-dimensional local fields, where $R$ is an excellent integral domain and $\Delta = (\mathfrak{p}_0, \ldots, \mathfrak{p}_n)$ is a saturated flag of prime ideals of $R$, i.e. $\mathrm{ht}(\mathfrak{p}_i/\mathfrak{p}_{i - 1}) = 1$ for $1\leq i \leq n$, with $\mathfrak{p}_0 = (0)$. Moreover, if $\mathfrak{p}_0$ is arbitrary and $R$ is not necessarily an integral domain, where $\Delta$ is still saturated, then $C_\Delta R$ is a complete semilocal ring with Jackobson radical $\mathrm{rad}(C_\Delta R) = \mathfrak{p}_0C_\Delta R$, see \cite[Corollary 3.3.5]{Ye}. This allows us to compute the Krull dimension, namely $\dim C_\Delta R = \mathrm{ht}\:\mathfrak{p}_0 = \mathrm{ht}\:\mathfrak{p}_0 + \mathrm{ht}(\mathfrak{p}_n/\mathfrak{p}_0) - n$ (see proposition \ref{propDimCDelta}). The latter equality follows from the fact that $R$ is catenary and $\Delta$ is saturated.

In \cite{Ma3}, the fibers of the map $\mathrm{Spec}\: \hat{R}\rightarrow \mathrm{Spec}\: R$, which are called {\it formal fibers}, were examined, where $R$ is a Noetherian local ring and $\hat{R}$ is its completion. In particular, it was shown that if $R$ is an essentially of finite type algebra over a field, i.e. a localization of a finite type algebra over a field, of dimension $n$, then the supremum of dimensions of formal fibers is equal to $n - 1$. Moreover, this supremum is attained at some minimal prime ideal. If $R$ is an integral domain, then this result can be interpreted  as the dimension of the ind-pro completion for which the corresponding flag consists of the zero ideal and the maximal ideal.


We generalize the above results. Namely, we show that the dimension of the ind-pro completion $C_\Delta R$ with $\Delta = (\mathfrak{p}_0, \ldots, \mathfrak{p}_n)$ is equal to $\mathrm{ht}\:\mathfrak{p}_0 + \mathrm{ht}(\mathfrak{p}_n/\mathfrak{p}_0) - n$ in the case $R$ being an essentially of finite type algebra over a field. In this case, we also show that $C_\Delta R$ is semilocal if and only if $\Delta$ is saturated. We do this in two steps. First, we prove these results for the polynomial ring and the flag consisting of coordinate hyperplanes. The most intricate part of this step is to calculate the dimension of the generic formal fiber (see proposition \ref{propDimS-10}) to perform inductive argument. To do this, we explicitly construct prime ideals of the desired height, and in the case when $\Delta$ is not saturated, there are infinitely many such ideals. 
Our approach is based on the ideas from \cite{Ma3}. On the second step, we notice that we can consider $R$ as a finite type algebra over a field, and then we can deduce this case from the polynomial one by the Noether normalization lemma. 

We show that the ind-pro completion $C_\Delta R$ along a flag $\Delta$ is an excellent ring for any Noetherian ring $R$. We also show that $C_\Delta R$ is locally equidimensional if $R$ is universally catenary and locally equidimensional Noetherian ring. This follows from the fact that the notions of a formally catenary and universally catenary Noetherian ring are equivalent, see \cite{Ra}. 

The paper is organized as follows. In section \ref{sect1}, we examine general properties of ind-pro completions such as excellence, local equidimensionality, etc. Here we also show that the dimension of $C_\Delta R$ is equal to the sum of the dimension of the ring $R$ and the dimension of the fiber over $\mathfrak{p}_0$, see proposition \ref{propDimCDelta}. In section \ref{sect2}, we treat the case of the polynomial ring and the flag consisting of coordinate hyperplanes. In particular, we calculate the Krull dimension of the corresponding ind-pro completion and deduce the criterion of semilocality. In section \ref{sect3}, we treat the general case.

\subsection*{Acknowledgments}The author thanks his advisor Denis V. Osipov for his attention to this work. The author was supported by the Theoretical Physics and Mathematics Advancement Foundation ``BASIS'' under grant no. 25-8-2-9-1.

\section{General properties}\label{sect1}

Throughout the paper, we assume that all rings are commutative with identity. Recall some definitions and results from commutative algebra, see \cite{Ma1}, \cite{Ma2} for details.

\begin{df}\label{45}
    Let $R$ be a ring and let $\mathfrak{p}$ be a prime ideal. Let $M$ be an $R$-module. Then denote by $S^{-1}_\mathfrak{p}M$ or $M_\mathfrak{p}$ the localization of $M$ with respect to $\mathfrak{p}$ and denote by $C_\mathfrak{p}M$ the completion of $M$ with respect to $\mathfrak{p}$. Also denote $M^\wedge_\mathfrak{p} = C_\mathfrak{p}(M_\mathfrak{p})$.
\end{df}

\begin{df}
    Let $R$ be a ring. By a {\it flag of prime ideals} $\Delta$ we mean a non-empty set of prime ideals $\{\mathfrak{p}_0, \ldots, \mathfrak{p}_n\}$ of the ring $R$ for $n \geq 0$ such that $\mathfrak{p}_i\subsetneq \mathfrak{p}_{i + 1}$ for $0\leq i < n$. We will denote this by $\Delta = (\mathfrak{p}_0, \ldots, \mathfrak{p}_n)$. For a prime ideal $\mathfrak{p}\subset \mathfrak{p}_0$ denote by $\Delta\:\mathrm{mod}\:\mathfrak{p} = (\mathfrak{p}_0/\mathfrak{p}, \ldots, \mathfrak{p}_n/\mathfrak{p})$ the flag of prime ideals of the ring $R/\mathfrak{p}$.
\end{df}

\begin{df}
    Let $R$ be a Noetherian ring and let $\Delta = (\mathfrak{p}_0, \ldots, \mathfrak{p}_n)$ be a flag of prime ideals of $R$. Then denote
    $$C_\Delta R = C_{\mathfrak{p}_0}S^{-1}_{\mathfrak{p}_0}\ldots C_{\mathfrak{p}_n}S^{-1}_{\mathfrak{p}_n}R.$$
    The ring $C_\Delta R$ is called the {\it ind-pro completion along the flag $\Delta$}.
    
    For an 
    $R$-module $M$ we define $C_\Delta M$ in the same way. Also set $C_\varnothing M = M$.
\end{df}

Note that $C_\Delta M = C_\Delta R\otimes_R M$ for a finitely generated $R$-module $M$ by \cite[Proposition 3.2.1]{Hu}. 

\begin{rem}
Note the difference between $C_\mathfrak{p}M$ and $C_\mathfrak{(p)}M$ for an $R$-module $M$. Namely, $C_\mathfrak{p}M$ is a completion of $M$ with respect to $\mathfrak{p}$-adic topology, and $C_\mathfrak{(p)}M = M^\wedge_\mathfrak{p}$ is a completion of $M_\mathfrak{p}$ with respect to $\mathfrak{p}$-adic topology.
\end{rem}

\begin{df}
    Let $R\overset{\varphi}{\longrightarrow} S$ be a morphism of rings. For an ideal $\mathfrak{a}\subset S$ denote by $\mathfrak{a}\cap R$ the ideal $\varphi^{-1}(\mathfrak{a})$. The ideal $\mathfrak{a}\cap R$ is called a {\it contraction} of $\mathfrak{a}$ to $R$.

    For a flag $\Delta = (\mathfrak{p}_0, \ldots, \mathfrak{p}_n)$ of prime ideals of $S$ denote $\Delta \cap R = (\mathfrak{p}_0\cap R, \ldots, \mathfrak{p}_n\cap R)$ if $\mathfrak{p}_i\cap R\neq \mathfrak{p}_{i + 1}\cap R$ for $0\leq i < n$. If for a flag $\Gamma$ of prime ideals of $R$ we have $\Delta \cap R = \Gamma$, then denote this by $\Delta \mid \Gamma$.
\end{df}

\begin{df}
For a prime ideal $\mathfrak{p}$ in a ring $R$ define its {\it height} $\mathrm{ht}\:\mathfrak{p}$ as a supremum of lengths of chains of prime ideals terminating at $\mathfrak{p}$. In other words, 
$$\mathrm{ht}\:\mathfrak{p} = \sup\{n\mid \exists\: \mathfrak{p}_0\subsetneq \mathfrak{p}_1\subsetneq\ldots\subsetneq\mathfrak{p}_n = \mathfrak{p}\}.$$

For a pair of prime ideals $\mathfrak{p}, \mathfrak{q}$ of $R$ such that $\mathfrak{p}\subset \mathfrak{q}$ define $\mathrm{ht}(\mathfrak{q}/\mathfrak{p})$ as 
$$\mathrm{ht}(\mathfrak{q}/\mathfrak{p}) = \sup\{n\mid \exists\: \mathfrak{p} = \mathfrak{p}_0\subsetneq \mathfrak{p}_1\subsetneq\ldots\subsetneq\mathfrak{p}_n = \mathfrak{q}\}.$$
We can interpret $\mathrm{ht}(\mathfrak{q}/\mathfrak{p})$ as the height of the prime ideal $\mathfrak{q}/\mathfrak{p}$ in the ring $R/\mathfrak{p}$.
\end{df}

\begin{df}
    Let $R$ be a ring. A chain $\mathfrak{p}_1\subsetneq \mathfrak{p}_2\subsetneq \ldots \subsetneq \mathfrak{p}_n$ of prime ideals of $R$ is called {\it saturated}, if there are no prime ideals between $\mathfrak{p}_i$ and $\mathfrak{p}_{i + 1}$ for all $1\leq i\leq n - 1$.
    
    A flag $\Delta = (\mathfrak{p}_0, \ldots, \mathfrak{p}_n)$ is called saturated if the chain of prime ideals $\mathfrak{p}_0\subset \mathfrak{p}_1\subset \ldots \subset \mathfrak{p}_n$ is saturated.
\end{df}

\begin{df}
For a ring $R$ define its {\it Krull dimension} as
$$\dim R = \sup_{\mathfrak{p}}\mathrm{ht}\:\mathfrak{p},$$
where supremum is taken over all prime ideals of $R$.
\end{df}

\begin{df}
Let $R$ be a ring. Then the {\it Jackobson radical} $\mathrm{rad}(R)$ is the intersection of all maximal ideals of $R$.
\end{df}

\begin{df}
A Noetherian ring $R$ is called {\it equidimensional} if $\dim R = \dim R/\mathfrak{p}$ for any minimal prime ideal $\mathfrak{p}$ of $R$.

A Noetherian ring $R$ is called {\it locally equidimensional} if the ring $R_\mathfrak{p}$ is equidimensional for any prime ideal $\mathfrak{p}$ of $R$.
\end{df}

\begin{df}
    A ring $R$ is called {\it catenary}, if for any pair of prime ideals $\mathfrak{p}, \mathfrak{q}$ such that $\mathfrak{p}\subset \mathfrak{q}$ every two maximal strictly increasing chains of prime ideals between $\mathfrak{p}$ and $\mathfrak{q}$ have the same finite length.  

    A ring $R$ is called {\it universally catenary}, if any finitely generated algebra over $R$ is catenary.
\end{df}

\begin{df}
Let $\Bbbk$ be a field. Then a ring $R$ is an {\it essentially of finite type algebra over $\Bbbk$} if there exists a finite type $\Bbbk$-algebra $R'$ and a multiplicatively closed subset $T\subset R'$ such that $R\cong T^{-1}R'$.
\end{df}

For a definition of an excellent ring, see \cite[Section 34]{Ma2}. Examples of excellent rings are algebras of finite type over a field, over $\Z$, or over complete semilocal Noetherian rings.


The following proposition is a generalization of \cite[Corollary 2.1.2]{Ba}

\begin{prop}
   Let $R$ be a Noetherian ring and let $\Delta$, $\Gamma$ be flags of prime ideals of $R$ such that $\Delta\subset \Gamma$, or $\Delta = \varnothing$. Then the map of rings
   $$C_\Delta R\longrightarrow C_\Gamma R$$
   is flat.
\end{prop}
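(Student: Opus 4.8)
The plan is to realize the canonical map $C_\Delta R \to C_\Gamma R$ as a finite composite of localizations and completions and to verify flatness one step at a time, using two standard facts: a localization is always flat, and the $I$-adic completion of a Noetherian ring is flat over it. Since every ring of the form $C_\bullet R$ arises from the Noetherian ring $R$ by iterating these two operations, and both localization and $I$-adic completion preserve Noetherianity, all intermediate rings appearing below are Noetherian, so I may apply these facts freely. I would also record the elementary observation that for any Noetherian ring $B$ and any prime $\mathfrak q$ of $R$, the structure map $B \to C_{\mathfrak q}S^{-1}_{\mathfrak q}B$ is flat, being the composite $B \to S^{-1}_{\mathfrak q}B \to C_{\mathfrak q}S^{-1}_{\mathfrak q}B$ of a localization and a completion of a Noetherian ring.

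The technical heart is the following lemma: if $A \to B$ is a flat homomorphism of Noetherian rings and $I \subseteq A$ is an ideal, then the induced map $\hat A \to \hat B$ on the $I$-adic and $IB$-adic completions is again flat. I would prove this via the local criterion of flatness. First, $\hat B$ is flat over $A$, being flat over $B$ which is flat over $A$; and $\hat B/I^n\hat B \cong B/I^nB$ is flat over $A/I^n \cong \hat A/I^n\hat A$ for every $n$, so in particular $\hat B/I\hat B$ is flat over $\hat A/I\hat A$. Next, $\hat B$ is ideally separated over $\hat A$: for a finitely generated ideal $\mathfrak a \subseteq \hat A$, the module $\mathfrak a \otimes_{\hat A}\hat B$ is finitely generated over the $I\hat B$-adically complete Noetherian ring $\hat B$, hence $I\hat B$-adically separated. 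Finally, $\mathrm{Tor}_1^{\hat A}(\hat A/I\hat A,\hat B) \cong \mathrm{Tor}_1^{A}(A/I,\hat B) = 0$, where the isomorphism uses $I\otimes_A\hat A \cong I\hat A$ (flatness of $\hat A$ over the Noetherian ring $A$ and finite generation of $I$) to identify $I\hat A\otimes_{\hat A}\hat B \cong I\otimes_A\hat B$, and the vanishing uses flatness of $\hat B$ over $A$. The local criterion then yields flatness of $\hat B$ over $\hat A$. Combined with the fact that a localization of a flat map is flat (base change), this shows that applying the operation $C_{\mathfrak q}S^{-1}_{\mathfrak q}$ to a flat map of Noetherian $R$-algebras again yields a flat map.

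With these tools I would induct on the number of primes in $\Gamma$, peeling off the smallest one while keeping $R$ as the fixed base ring. Write $\Gamma = (\mathfrak q_0,\mathfrak q_1,\ldots,\mathfrak q_m)$ and $\Gamma' = (\mathfrak q_1,\ldots,\mathfrak q_m)$, so that $C_\Gamma R = C_{\mathfrak q_0}S^{-1}_{\mathfrak q_0}C_{\Gamma'}R$; the base case $\Gamma = \varnothing$ (forcing $\Delta = \varnothing$) is the identity on $R$. If $\mathfrak q_0 \notin \Delta$, then $\Delta \subseteq \Gamma'$, so by induction $C_\Delta R \to C_{\Gamma'}R$ is flat, and composing with the flat map $C_{\Gamma'}R \to C_{\mathfrak q_0}S^{-1}_{\mathfrak q_0}C_{\Gamma'}R = C_\Gamma R$ of the first paragraph finishes this case. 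If $\mathfrak q_0 \in \Delta$, then $\mathfrak q_0$ is also the smallest prime of $\Delta$; writing $\Delta' = \Delta\setminus\{\mathfrak q_0\}$ one has $\Delta' \subseteq \Gamma'$ and $C_\Delta R = C_{\mathfrak q_0}S^{-1}_{\mathfrak q_0}C_{\Delta'}R$. By induction $C_{\Delta'}R \to C_{\Gamma'}R$ is flat, and applying $C_{\mathfrak q_0}S^{-1}_{\mathfrak q_0}$ to it is flat by the second paragraph, which is exactly the canonical map $C_\Delta R \to C_\Gamma R$.

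The main obstacle is the completion lemma. Unlike localization, completion does not commute with arbitrary base change, so one cannot simply write $\hat B = \hat A \otimes_A B$ (here $B$ is \emph{not} module-finite over $A$, as it is itself built from completions), and the ideal $I$ need not lie in the Jacobson radical. The delicate point is therefore verifying the hypotheses of the local criterion of flatness — the ideal separatedness of $\hat B$ over $\hat A$ and the vanishing of $\mathrm{Tor}_1$ — which is precisely what the Noetherian hypothesis and the completeness of $\hat B$ provide. Everything else reduces to bookkeeping of flat composites and the preservation of the Noetherian property under the two basic operations.
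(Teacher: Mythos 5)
Your proposal is correct and follows essentially the same route as the paper: induction on $|\Gamma|$ with the same case split (whether the smallest prime of $\Gamma$ lies in $\Delta$), the easy case handled by composing the induction hypothesis with a flat localization--completion, and the hard case settled by the local criterion of flatness \cite[Theorem 22.3]{Ma1}, with the fiberwise flatness supplied by the induction hypothesis and the $\mathrm{Tor}_1$/injectivity condition supplied by flatness over the base. The only difference is presentational: you package the hard case as a standalone lemma on $I$-adic completions of flat maps of Noetherian rings, whereas the paper runs the same verification directly on $C_\Delta R\longrightarrow C_\Gamma R$.
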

\begin{proof}
	We will proceed by induction on $|\Gamma|$. We can assume that $\Delta \neq \Gamma$, otherwise the statement is clear. If $\Delta = \varnothing$, then the assertion follows from \cite[Proposition 3.2.1]{Hu}. In particular, this holds for $|\Gamma| = 1$.
	
	Consider the case $|\Gamma| > 1$ and $\Delta \neq \varnothing$. Let $\Delta = (\mathfrak{p}, \ldots)$ and $\Gamma = (\mathfrak{q}, \ldots)$. First, let us assume $\mathfrak{p}\neq \mathfrak{q}$. Then $\Delta\subset \Gamma\setminus \mathfrak{q}$, and $C_\Gamma R$ is a completion of $S^{-1}_\mathfrak{q}C_{\Gamma\setminus \mathfrak{q}}R$. Since the latter ring is flat over $C_{\Gamma\setminus \mathfrak{q}}R$, the map $C_{\Gamma\setminus \mathfrak{q}}R \rightarrow C_\Gamma R$ is flat by \cite[Theorem 4.9]{GS}. From this we obtain that the composition 
	$$C_\Delta R \longrightarrow C_{\Gamma\setminus \mathfrak{q}} R\longrightarrow C_\Gamma R$$
 is flat by induction hypothesis.
		 
		 Now assume $\mathfrak{p} = \mathfrak{q}$. Since $C_\Delta R$ is complete with respect to $\mathfrak{p}C_\Delta R$-adic topology, $\mathfrak{p}C_\Delta R\subset \mathrm{rad}(C_\Delta R)$. Observe that 
	$$\mathfrak{p}C_\Delta R\otimes_{C_\Delta R}C_\Gamma R = \mathfrak{p}\otimes_R C_\Delta 	R\otimes_{C_\Delta R}C_\Gamma R =\mathfrak{p}\otimes_R C_\Gamma R = \mathfrak{p}C_\Gamma R,$$
since $C_\Delta R$ and $C_\Gamma R$ are flat $R$-modules by \cite[Proposition 3.2.1]{Hu}. By induction hypothesis, $C_{\Gamma\setminus \mathfrak{p}}R$ is flat over $C_{\Delta \setminus \mathfrak{p}}R$, from what follows that $C_{\Gamma\setminus \mathfrak{p}}R/\mathfrak{p}C_{\Gamma\setminus \mathfrak{p}}R$ is flat over $C_{\Delta\setminus \mathfrak{p}}R/\mathfrak{p}C_{\Delta\setminus \mathfrak{p}}R$, and the same holds for their localizations with respect to $\mathfrak{p}$. Observe that 
$$C_\Gamma R/\mathfrak{p}C_\Gamma R = S^{-1}_\mathfrak{p}C_{\Gamma\setminus \mathfrak{p}}R/\mathfrak{p}S^{-1}_\mathfrak{p}C_{\Gamma\setminus \mathfrak{p}}R, \quad C_\Delta R/\mathfrak{p}C_\Delta R = S^{-1}_\mathfrak{p}C_{\Delta\setminus \mathfrak{p}}R/\mathfrak{p}S^{-1}_\mathfrak{p}C_{\Delta\setminus \mathfrak{p}}R.$$
Therefore $C_\Gamma R/\mathfrak{p}C_\Gamma R$ is flat over $C_\Delta R/\mathfrak{p}C_\Delta R$. By \cite[Theorem 22.3]{Ma1}, $C_\Gamma R$ is flat over $C_\Delta R$.
\end{proof}

\begin{thm}\label{CDeltaExcellent}
    Let $R$ be a Noetherian ring and let $\Delta$ be a flag of prime ideals of $R$. Then the ring $C_\Delta R$ is excellent.
\end{thm}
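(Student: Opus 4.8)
The plan is to argue by induction on the number $|\Delta|$ of primes in the flag, stripping off the smallest prime $\mathfrak{p}_0$ at each step. For $\Delta = (\mathfrak{p}_0, \ldots, \mathfrak{p}_n)$ put $\Delta' = (\mathfrak{p}_1, \ldots, \mathfrak{p}_n)$; unwinding the definition of $C_\Delta R$ gives the identity
$$C_\Delta R = C_{\mathfrak{p}_0}\, S^{-1}_{\mathfrak{p}_0}\bigl(C_{\Delta'}R\bigr),$$
where $S^{-1}_{\mathfrak{p}_0}$ now denotes localization of the $R$-algebra $C_{\Delta'}R$ at the image of $R\setminus\mathfrak{p}_0$ and $C_{\mathfrak{p}_0}$ denotes completion along the extended ideal $\mathfrak{p}_0\, C_{\Delta'}R$. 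Hence $C_\Delta R$ is produced from $C_{\Delta'}R$ by a single localization followed by a single adic completion, and the whole theorem reduces to the claim that each of these two operations preserves excellence.

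For the base case $|\Delta| = 1$, i.e. $\Delta = (\mathfrak{p}_0)$, one has $C_\Delta R = C_{\mathfrak{p}_0}S^{-1}_{\mathfrak{p}_0}R = R^\wedge_{\mathfrak{p}_0}$, the completion of the Noetherian local ring $R_{\mathfrak{p}_0}$ at its maximal ideal; being a complete Noetherian local ring it is excellent, and crucially this holds without any hypothesis on $R$, so it is the first completion that ``creates'' excellence. For the inductive step I would assume $C_{\Delta'}R$ is excellent and invoke two permanence statements: \emph{(L)} any localization of an excellent ring is excellent, and \emph{(C)} the $I$-adic completion of an excellent ring is excellent for every ideal $I$. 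Statement (L) is among the standard stability properties of excellent rings \cite[Section 34]{Ma2}; applying (L) to $C_{\Delta'}R$ and then (C) with $I = \mathfrak{p}_0\, S^{-1}_{\mathfrak{p}_0}C_{\Delta'}R$ yields that $C_\Delta R$ is excellent.

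The hard part will be (C), permanence of excellence under a \emph{general} adic completion. The temptation is to reduce to the elementary fact that a complete semilocal Noetherian ring is excellent, but this is not available here: after localizing at $\mathfrak{p}_0$ the ring need not be semilocal, since when $\Delta$ fails to be saturated there are infinitely many maximal ideals contracting to $\mathfrak{p}_0$. I would therefore verify the defining conditions of excellence for the completion $\widehat{A}$ of an excellent ring $A$ one at a time: $\widehat{A}$ is Noetherian (classical), the G-ring property of the formal fibers is inherited by $\widehat{A}$, the J-2 condition is inherited by $\widehat{A}$, and universal catenarity passes to $\widehat{A}$ (checked through the equivalence of universal and formal catenarity of Ratliff \cite{Ra}). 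Assembling these gives (C) and closes the induction; the G-ring and universal-catenarity steps for a non-semilocal completion are where essentially all of the work sits.
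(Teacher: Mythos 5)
Your induction, your base case, and your reduction of the inductive step to the two permanence statements (L) and (C) coincide exactly with the structure of the paper's proof, and (L) is indeed among the standard stability properties of excellent rings. The gap is in (C). The assertion that the $I$-adic completion of an excellent (or quasi-excellent) Noetherian ring is again excellent cannot be obtained by ``verifying the defining conditions one at a time'': the inheritance of the G-ring property of the formal fibers under a general, non-semilocal ideal-adic completion was a long-standing question of Grothendieck, settled only by an unpublished theorem of Gabber whose proof is written up in \cite{KS}, and this is precisely what the paper invokes at this point (\cite[Main Theorem 2]{KS}). Your plan item ``the G-ring property of the formal fibers is inherited by $\widehat{A}$'' simply restates the statement to be proved; no elementary argument is known, the previously known cases (semilocal rings, rings containing $\mathbb{Q}$) were themselves substantial theorems, and, as you correctly observe, the semilocal reduction is unavailable here when $\Delta$ is not saturated. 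So as written the proposal does not close the induction.

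The remaining items of (C) are comparatively harmless: Noetherianity of the completion is classical, the G-ring and J-2 conditions together are exactly the quasi-excellence delivered by \cite[Main Theorem 2]{KS}, and universal catenarity of $\widehat{A}$ can indeed be transferred via Ratliff's formal catenarity criterion \cite{Ra}, using that $\widehat{A}_{\mathfrak{M}}$ and $A_{\mathfrak{M}\cap A}$ have a common completion for every maximal ideal $\mathfrak{M}$ of $\widehat{A}$. To repair the argument, replace your sketch of (C) by a citation of Gabber's theorem in the form given in \cite[Main Theorem 2]{KS}; with that single reference your proof becomes complete and is then essentially identical to the one in the paper.
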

\begin{proof}
    We will proceed by induction on $|\Delta|$. If $|\Delta| = 1$, then $C_\Delta R$ is a local complete Noetherian ring, which is an excellent ring (see \cite[Section 34]{Ma2}). Assume $|\Delta| > 1$. Let $\Delta = (\mathfrak{p}, \ldots)$. By induction hypothesis, $C_{\Delta\setminus \mathfrak{p}}R$ is an excellent ring. Then $S^{-1}_\mathfrak{p}C_{\Delta\setminus \mathfrak{p}}R$ is excellent as a localization of an excellent ring, and $C_\Delta R$ is excellent by \cite[Main Theorem 2]{KS}.
\end{proof}

In the following definition CM means Cohen--Macaulay. For details, see \cite{Ma2}.
\begin{df}
A Noetherian ring $R$ is called normal (resp. regular, resp. CM, resp. reduced), if $R_\mathfrak{p}$ is a normal local domain (resp. regular local ring, CM local ring, reduced local ring) for any $\mathfrak{p}\in \mathrm{Spec}\: R$.
\end{df}


The following proposition is a simple consequence of \cite[Theorem 79]{Ma2}

\begin{prop}\label{propRegNormCMRed}
Let $R$ be a normal (resp. regular, resp. CM, resp. reduced) excellent ring and let $\Delta$ be a flag of prime ideals of $R$, or $\Delta = \varnothing$. Then $C_\Delta R$ is a normal (resp. regular, resp. CM, resp. reduced) ring.
\end{prop}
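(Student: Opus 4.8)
The plan is to induct on $|\Delta|$, peeling off the smallest prime of the flag and showing that each of the four properties (which I will refer to uniformly as property $P$) is inherited separately under the two operations out of which $C_\Delta R$ is built: localization and completion of an excellent ring. The base case $\Delta = \varnothing$ is exactly the hypothesis, since $C_\varnothing R = R$.

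For the inductive step I would write $\Delta = (\mathfrak{p}, \ldots)$ and set $B = C_{\Delta \setminus \mathfrak{p}} R$, so that $C_\Delta R = C_\mathfrak{p} S^{-1}_\mathfrak{p} B$. By the induction hypothesis $B$ has property $P$, and by Theorem \ref{CDeltaExcellent} it is excellent. The localization step is then immediate: each of the four properties is defined pointwise on the spectrum through the local rings $A_\mathfrak{q}$, and the local rings of $S^{-1}_\mathfrak{p} B$ form a subfamily of those of $B$; since excellence is likewise stable under localization, $A := S^{-1}_\mathfrak{p} B$ is again an excellent ring with property $P$.

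For the completion step I would invoke excellence. The completion map $A \to C_\mathfrak{p} A = \hat{A}$ is flat, and because $A$ is excellent its formal fibers are geometrically regular; a geometrically regular fiber is in particular regular, and a regular local ring is normal, CM and reduced, so the fibers have whichever property $P$ we are tracking. This is the situation governed by \cite[Theorem 79]{Ma2}: along a flat local homomorphism whose fiber has property $P$, the target has $P$ as soon as the source does. Since $\hat{A}$ need not be local, I would apply this pointwise — for a prime $\mathfrak{Q}$ of $\hat{A}$ with contraction $\mathfrak{q} = \mathfrak{Q}\cap A$, the induced map $A_\mathfrak{q} \to \hat{A}_\mathfrak{Q}$ is flat local with geometrically regular fiber, whence $\hat{A}_\mathfrak{Q}$ has $P$; as this holds for every $\mathfrak{Q} \in \mathrm{Spec}\,\hat{A}$, the ring $\hat{A} = C_\Delta R$ has $P$.

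The only genuine content beyond this bookkeeping is the geometric regularity of the formal fibers of $A$, which is precisely what excellence (established in Theorem \ref{CDeltaExcellent}) supplies, and I expect this — together with the reduction of the a priori local statement of \cite[Theorem 79]{Ma2} to the possibly non-local ring $C_\Delta R$ by localizing at each of its primes — to be the crux of the argument. Note that we only ever need the ascent direction (property of $R$ forces the property of $C_\Delta R$), which matches the transfer theorem exactly, so no converse or descent statement is required.
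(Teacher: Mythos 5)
Your proposal is correct and follows essentially the same route as the paper: induction on $|\Delta|$, noting that the property passes to the localization $S^{-1}_\mathfrak{p}C_{\Delta\setminus\mathfrak{p}}R$ and then to its $\mathfrak{p}$-adic completion via \cite[Theorem 79]{Ma2}. The paper's proof is just a terser version of yours, leaving implicit the role of geometrically regular formal fibers and the pointwise reduction to local rings that you spell out.
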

\begin{proof}
We will proceed by induction on $|\Delta|$. The case $|\Delta| = 0$ follows from the assumptions. If $\Delta = (\mathfrak{p}, \ldots)$, then $C_\Delta R$ is a $\mathfrak{p}$-adic completion of the ring $S^{-1}_\mathfrak{p}C_{\Delta\setminus \mathfrak{p}}R$, and the latter ring is normal (resp. regular, resp. CM, resp. reduced) as a localization of a normal (resp. regular, resp. CM, resp. reduced) ring $C_{\Delta\setminus \mathfrak{p}} R$. Thus $C_\Delta R$ is normal (resp. regular, resp. CM, resp. reduced) by \cite[Theorem 79]{Ma2}.
\end{proof}

The next theorem is well-known in case of a saturated flag, see \cite[Section 3.3]{Ye}.
\begin{thm}\label{CDeltaRegularNormal}
Let $R$ be a Noetherian regular (resp. excellent normal) domain. Let $\Delta = (\mathfrak{p}_0, \ldots, \mathfrak{p}_n)$ be a flag of prime ideals of $R$ such that $R_{\mathfrak{p}_n}/\mathfrak{p}_iR_{\mathfrak{p}_n}$ is a regular (resp. normal) ring for $i = 0, 1, \ldots, n - 1$. Then $C_\Delta R$ is a regular (resp. normal) domain.
\end{thm}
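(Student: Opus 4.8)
The plan is to induct on $n$, where $\Delta=(\mathfrak p_0,\dots,\mathfrak p_n)$, proving simultaneously that $C_\Delta R$ is regular (resp. normal) and that it is a domain. Write $B=C_{\Delta\setminus\mathfrak p_0}R$ (so $B=R$ when $n=0$) and $A=S^{-1}_{\mathfrak p_0}B$, so that $C_\Delta R=C_{\mathfrak p_0}A$ is the $\mathfrak p_0$-adic completion of $A$. The regular/normal assertion is the easy half. In the normal case $R$ is excellent normal, so $C_\Delta R$ is normal by Proposition \ref{propRegNormCMRed}. In the regular case one argues along the construction: by the inductive hypothesis $B$ is regular, hence so is the localization $A$; for $n\ge 1$ the ring $B$ is excellent by Theorem \ref{CDeltaExcellent}, so $A$ is excellent and its $\mathfrak p_0$-adic completion $C_\Delta R$ is again regular by \cite[Theorem 79]{Ma2}, while for $n=0$ one has $C_\Delta R=R^\wedge_{\mathfrak p_0}$, the completion of the regular local ring $R_{\mathfrak p_0}$, which is regular local.

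The substance is to prove that $C_\Delta R$ is a domain, and the idea is to reduce this to connectedness of $\mathrm{Spec}\,C_\Delta R$ and then to push connectedness down to the residue along $\mathfrak p_0$. Since $C_\Delta R$ is complete for the $\mathfrak p_0 C_\Delta R$-adic topology, idempotents lift uniquely, so reduction induces a bijection between the idempotents of $C_\Delta R$ and those of $C_\Delta R/\mathfrak p_0 C_\Delta R$; hence $\mathrm{Spec}\,C_\Delta R$ is connected if and only if $\mathrm{Spec}(C_\Delta R/\mathfrak p_0 C_\Delta R)$ is. I would then compute this residue ring: passing to the $\mathfrak p_0$-adic completion does not change the quotient by $\mathfrak p_0$, so
$$C_\Delta R/\mathfrak p_0 C_\Delta R = A/\mathfrak p_0 A = S^{-1}_{\mathfrak p_0}\bigl(B/\mathfrak p_0 B\bigr),$$
and using $C_\Delta M=C_\Delta R\otimes_R M$ for the cyclic module $M=R/\mathfrak p_0$, together with compatibility of localization and completion with this quotient, one identifies $B/\mathfrak p_0 B=C_{(\Delta\setminus\mathfrak p_0)\,\mathrm{mod}\,\mathfrak p_0}(R/\mathfrak p_0)$.

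The hypotheses are tailored precisely to make the inductive hypothesis applicable to $R/\mathfrak p_0$: it is a regular (resp. normal) domain — in the normal case also excellent, being a quotient of the excellent ring $R$ — and its quotients $(R/\mathfrak p_0)/(\mathfrak p_i/\mathfrak p_0)=R/\mathfrak p_i$ are regular (resp. normal) for $i=1,\dots,n-1$. Hence, since $(\Delta\setminus\mathfrak p_0)\,\mathrm{mod}\,\mathfrak p_0=(\mathfrak p_1/\mathfrak p_0,\dots,\mathfrak p_n/\mathfrak p_0)$ consists of fewer primes, the inductive hypothesis shows $B/\mathfrak p_0 B$ is a regular (resp. normal) domain; for $n=0$ this step is the trivial observation that $S^{-1}_{\mathfrak p_0}(R/\mathfrak p_0)$ is the fraction field of $R/\mathfrak p_0$. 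In either case $C_\Delta R/\mathfrak p_0 C_\Delta R=S^{-1}_{\mathfrak p_0}(B/\mathfrak p_0 B)$ is a localization of a domain, hence a domain, so its spectrum is connected. By the idempotent correspondence $\mathrm{Spec}\,C_\Delta R$ is connected as well; since $C_\Delta R$ is a normal Noetherian ring it is a finite product of normal domains, and connectedness leaves a single factor, so $C_\Delta R$ is a normal domain, which in the regular case is moreover regular.

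The hard part is the domain statement, and within it the crux is the identification of the residue ring $C_\Delta R/\mathfrak p_0 C_\Delta R$ with an ind-pro completion of $R/\mathfrak p_0$ so that the inductive hypothesis applies: this is exactly where the assumption that each $R/\mathfrak p_i$ be regular (resp. normal) is consumed, and it is what prevents the final completion from splitting off several factors. A secondary technical point is the regular case of the regular/normal half, where $R$ is not assumed excellent; I would circumvent this by only ever completing rings already known to be excellent (the intermediate ind-pro completions, via Theorem \ref{CDeltaExcellent}) or regular local (the base step $n=0$).
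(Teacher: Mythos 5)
Your proposal is correct and follows essentially the same route as the paper: induct on $n$, get regularity/normality of $C_\Delta R$ from excellence of the intermediate completion $C_{\Delta\setminus\mathfrak p_0}R$ together with \cite[Theorem 79]{Ma2}, identify $C_\Delta R/\mathfrak p_0C_\Delta R$ with $S^{-1}_{(0)}C_{\tilde\Gamma}(R/\mathfrak p_0)$ and use the inductive hypothesis for $R/\mathfrak p_0$ to see it is a domain, then conclude that the finite product of normal domains $C_\Delta R$ has a single factor. The only cosmetic difference is that you phrase the last step via unique lifting of idempotents along the complete ideal $\mathfrak p_0C_\Delta R$, whereas the paper notes directly that a prime ideal contained in $\mathrm{rad}(C_\Delta R)$ forces a single component; these are the same argument.
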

\begin{rem}
 In other words, $C_\Delta R$ is a regular (resp. normal) domain whenever $\mathfrak{p}_n$ is a regular (resp. normal) point on $\mathrm{Spec}(R/\mathfrak{p}_i)$ for all $i = 0, \dots, n-1$.
\end{rem}
\begin{proof}
Since $C_\Delta R$ does not change if we consider $R_{\mathfrak{p}_n}$ instead of $R$, we can assume that $R/\mathfrak{p}_i$ is a regular (resp. normal) ring for $i = 0, 1, \ldots, n - 1$ without loss of generality.

We will proceed by induction on $n$.
First, let us treat the case $n = 0$. By our assumptions, $R$ is a Noetherian regular (resp. excellent normal) domain and $\mathfrak{p}_0$ is an arbitrary prime ideal.
If $R$ is a regular Noetherian ring, then $C_{(\mathfrak{p}_0)} R = R_{\mathfrak{p}_0}^\wedge$ is also regular Noetherian. It is also a local ring. In particular, it is a domain. If $R$ is an excellent normal domain, then $R_{\mathfrak{p}_0}$ is a local excellent normal domain, hence $C_{(\mathfrak{p}_0)} R= R_{\mathfrak{p}_0}^\wedge$ is a local normal ring by proposition \ref{propRegNormCMRed}, from what follows it is also a domain.

Let $n>0$. By our assumptions, $R$ is a Noetherian regular (resp. excellent normal) domain, $R/\mathfrak{p}_i$ is regular (resp. normal) for $0\leq i \leq n - 1$, and $\mathfrak{p}_n$ is an arbitrary prime ideal.
Denote $\Gamma = \Delta\setminus \mathfrak{p}_0$, $\tilde{\Gamma} = \Gamma \:\mathrm{mod}\:\mathfrak{p}_0$. By induction hypothesis, $C_\Gamma R$ and $C_{\tilde{\Gamma}}(R/\mathfrak{p}_0)$ are regular (resp. normal) domains.  It follows that $C_{\tilde{\Delta}}(R/\mathfrak{p}_0) = S^{-1}_{(0)}C_{\tilde{\Gamma}}(R/\mathfrak{p}_0)$ is a domain, since $\tilde{\Delta} = (0)\vee \tilde{\Gamma}$, where $\tilde{\Delta} = \Delta \:\mathrm{mod}\:\mathfrak{p}_0$. But $C_{\tilde{\Delta}}(R/\mathfrak{p}_0) = C_\Delta R/\mathfrak{p}_0C_\Delta R$, whence $\mathfrak{p}_0C_\Delta R$ is a prime ideal in $C_\Delta R$. 

By theorem \ref{CDeltaExcellent}, $C_\Gamma R$ is an excellent ring as well as its localization $S^{-1}_{\mathfrak{p}_0}C_\Gamma R$. Moreover, since $C_\Gamma R$ is a regular (resp. normal) ring, $S^{-1}_{\mathfrak{p}_0}C_\Gamma R$ is also regular (resp. normal). Therefore $C_\Delta R = C_{\mathfrak{p}_0}S^{-1}_{\mathfrak{p}_0}C_\Gamma R$ is a regular (resp. normal) Noetherian ring by \cite[Theorem 79]{Ma2}. In other words, $C_\Delta R$ is a finite product of regular (resp. normal) domains. But since $\mathfrak{p}_0C_\Delta R\subset \mathrm{rad}(C_\Delta R)$ and $\mathfrak{p}_0C_\Delta R$ is a prime ideal, we see that there is only one component in this product, i.e. $C_\Delta R$ is a regular (resp. normal) domain.
\end{proof}

The following theorem follows from Ratliff's results on the equivalence of universally catenary and formally catenary Noetherian local rings \cite{Ra}.

\begin{thm}\label{CDeltaLocallyEquidim}
    Let $R$ be a universally catenary locally equidimensional Noetherian ring and let $\Delta$ be a flag of prime ideals of $R$, or $\Delta = \varnothing$. Then the ring $C_\Delta R$ is locally equidimensional.
\end{thm}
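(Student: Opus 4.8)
The plan is to induct on $|\Delta|$, peeling off the smallest prime of the flag and reducing the inductive step to a single statement: that a $\mathfrak{p}$-adic completion of an excellent, locally equidimensional ring is again locally equidimensional. For $\Delta = \varnothing$ the claim is the hypothesis on $R$. For the inductive step write $\Delta = (\mathfrak{p}_0, \ldots, \mathfrak{p}_n)$ and set $A = C_{\Delta \setminus \mathfrak{p}_0} R$, so that $C_\Delta R = C_{\mathfrak{p}_0} S^{-1}_{\mathfrak{p}_0} A$. By the induction hypothesis, applied to $R$ and the shorter flag $\Delta \setminus \mathfrak{p}_0$, the ring $A$ is locally equidimensional, and by theorem \ref{CDeltaExcellent} it is excellent. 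Localization preserves both properties: $B := S^{-1}_{\mathfrak{p}_0} A$ is excellent, and for a prime $\mathfrak{q}$ of $B$ one has $B_\mathfrak{q} = A_{\mathfrak{q} \cap A}$, which is equidimensional by the assumption on $A$. Thus everything reduces to showing that $C_{\mathfrak{p}_0} B$ is locally equidimensional for an excellent, locally equidimensional ring $B$.

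The key tool is Ratliff's theorem in its local form: a Noetherian local ring is quasi-unmixed, i.e. has equidimensional completion, if and only if it is equidimensional and universally catenary. Since $B$ is excellent it is universally catenary, so $B_\mathfrak{q}$ is quasi-unmixed for every prime $\mathfrak{q}$ of $B$. Let $\hat B = C_{\mathfrak{p}_0} B = C_\Delta R$ denote the $\mathfrak{p}_0 B$-adic completion; it is excellent by theorem \ref{CDeltaExcellent}, hence universally catenary, so by the same equivalence it suffices to prove that $\hat B_\mathfrak{Q}$ is equidimensional, equivalently quasi-unmixed, for every prime $\mathfrak{Q}$ of $\hat B$. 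Because $B$ is a G-ring, the completion map $B \to \hat B$ is a regular morphism, via the same mechanism underlying \cite[Theorem 79]{Ma2} used above, now applied at the level of fibres: it is flat with geometrically regular fibres.

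Now fix a prime $\mathfrak{Q}$ of $\hat B$ and let $\mathfrak{q} = \mathfrak{Q} \cap B$. The induced homomorphism $B_\mathfrak{q} \to \hat B_\mathfrak{Q}$ is flat and local. Its closed fibre $\hat B_\mathfrak{Q} / \mathfrak{q} \hat B_\mathfrak{Q}$ is a localization of the fibre ring $\hat B \otimes_B \kappa(\mathfrak{q})$, which is regular; hence the closed fibre is a regular local ring, in particular a domain, and therefore quasi-unmixed. Since the source $B_\mathfrak{q}$ is quasi-unmixed and the closed fibre is quasi-unmixed, the ascent of quasi-unmixedness (formal equidimensionality) along a flat local homomorphism with regular closed fibre yields that $\hat B_\mathfrak{Q}$ is quasi-unmixed, hence equidimensional. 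As $\mathfrak{Q}$ was arbitrary, $\hat B = C_\Delta R$ is locally equidimensional, which completes the induction.

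The main obstacle is the completion step, and within it two points deserve care. First, we complete $B$ along the ideal $\mathfrak{p}_0 B$ rather than at a maximal ideal, and $\hat B$ is in general neither local nor even semilocal; this is why we cannot argue with a single completion but must instead verify quasi-unmixedness at every prime $\mathfrak{Q}$ separately, using that the $\mathfrak{p}_0$-adic completion of a G-ring is still a regular morphism, so that each closed fibre $\hat B_\mathfrak{Q}/\mathfrak{q}\hat B_\mathfrak{Q}$ is regular. Second, one must invoke the correct ascent statement for quasi-unmixedness along flat local maps with regular closed fibre; this, together with Ratliff's equivalence between quasi-unmixed and (equidimensional $+$ universally catenary) local rings, is precisely the point at which the equivalence of formally catenary and universally catenary Noetherian rings enters the argument.
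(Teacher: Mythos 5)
Your induction, the reduction to the single completion step $B \to \hat B$ (with $B = S^{-1}_{\mathfrak{p}_0}C_{\Delta\setminus\mathfrak{p}_0}R$), and the use of Ratliff's equivalence between quasi-unmixed and (equidimensional $+$ universally catenary) all match the paper. The divergence, and the problem, is in how you finish the completion step. You test every prime $\mathfrak{Q}$ of $\hat B$ and invoke ``the ascent of quasi-unmixedness along a flat local homomorphism with regular closed fibre'' to pass from $B_{\mathfrak{q}}$ to $\hat B_{\mathfrak{Q}}$. That ascent statement is not something you prove or pin to a reference, and it is the entire content of the step: for a non-maximal $\mathfrak{Q}$ the formal fibre $\hat B\otimes_B\kappa(\mathfrak{q})$ is a genuinely large ring, so $B_{\mathfrak{q}}\to\hat B_{\mathfrak{Q}}$ is far from an analytic isomorphism, and deducing equidimensionality of the target from equidimensionality of the (completion of the) source plus regularity of one fibre requires a real argument --- one has to control where the minimal primes of $\widehat{\hat B_{\mathfrak{Q}}}$ sit relative to the prime $\mathfrak{q}\widehat{\hat B_{\mathfrak{Q}}}$, which neither flatness nor going-down gives you for free. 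I believe the statement is true (it is in the circle of Ratliff's results on quasi-unmixedness under flat local extension), but as written it is an appeal to an unproved lemma of essentially the same depth as the theorem, so the proof is not complete.

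The fix is already implicit in your own setup and is what the paper does. Since $\hat B = C_\Delta R$ is catenary (it is excellent by theorem \ref{CDeltaExcellent}), local equidimensionality of $\hat B$ only needs to be checked at its \emph{maximal} ideals. By \cite[Corollary 2.19]{GS} a maximal ideal $\mathfrak{M}$ of $\hat B$ corresponds to a maximal ideal $\mathfrak{m}$ of $B$ containing $\mathfrak{p}_0 B$, and there the closed fibre collapses: $\hat B/\mathfrak{p}_0\hat B = B/\mathfrak{p}_0 B$, so $B_{\mathfrak{m}}$ and $\hat B_{\mathfrak{M}}$ are analytically isomorphic, i.e.\ $B_{\mathfrak{m}}^\wedge = \hat B_{\mathfrak{M}}^\wedge$. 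Then no ascent along a flat map with large fibres is needed at all: $B_{\mathfrak{m}}$ is universally catenary and equidimensional, so $B_{\mathfrak{m}}^\wedge$ is equidimensional by \cite[Tag 0AW6]{Sta}, hence $\hat B_{\mathfrak{M}}^\wedge$ is equidimensional, hence $\hat B_{\mathfrak{M}}$ is equidimensional by \cite[Tag 0AW4]{Sta}. If you prefer to keep your route through arbitrary primes $\mathfrak{Q}$, you must either cite a precise form of the ascent theorem for formal equidimensionality along flat local maps with regular (or formally equidimensional) closed fibre, or prove it; as submitted, that link in the chain is missing.
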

\begin{proof}
	Let us proceed by induction on $|\Delta|$. If $|\Delta| = 0$, i.e. $\Delta = \varnothing$, then the assertion follows from the assumptions of the theorem. Assume that $|\Delta| > 0$. Let $\Delta = (\mathfrak{p}, \ldots)$. Then $C_{\Delta\setminus \mathfrak{p}}R$ is locally equidimensional by induction hypothesis. Let us denote $A = S^{-1}_\mathfrak{p}C_{\Delta\setminus \mathfrak{p}}R$ and $B = C_\Delta R$. Then $B$ is a $\mathfrak{p}A$-adic completion of $A$. 
	
	Observe that $A$ and $B$ are universally catenary by assumptions and by theorem \ref{CDeltaExcellent}. Note also that $A$ is locally equidimensional as a localization of a locally equidimensional ring. 
	
	By \cite[Corollary 2.19]{GS}, there is a bijection between the set of maximal ideals of $B$ and the set of those maximal ideals of $A$  which contain $\mathfrak{p}A$. Let $\mathfrak{M}$ be a maximal ideal of $B$ and let $\mathfrak{m} = \mathfrak{M}\cap A$ be a maximal ideal of $A$. Then there is a sequence of morphisms and the equality 
	$$A_\mathfrak{m}\longrightarrow B_\mathfrak{M}\longrightarrow B_\mathfrak{M}^\wedge = A_\mathfrak{m}^\wedge,$$
	where the second arrow is faithfully flat. 
	This follows from the fact that $A_\mathfrak{m}$ and $B_\mathfrak{M}$ are analytically isomorphic, i.e. they have the same completion, see \cite[Chapter 9, Section 24, (24.D)]{Ma2}. 
Since $A_\mathfrak{m}$ is universally catenary and equidimensional as a localization of $A$ at a maximal ideal, we obtain that $A_\mathfrak{m}^\wedge$ is equidimensional by \cite[Theorem 2.6]{Ra} (see also \cite[Tag 0AW6]{Sta}). Therefore $B_\mathfrak{M}^\wedge$ is equidimensional. It is also catenary as a complete local Noetherian ring. Then $B_\mathfrak{M}$ is equidimensional by \cite[Tag 0AW4]{Sta}. Note that the property of being a locally equidimensional ring for a catenary Noetherian ring is equivalent to the fact that the localizations at all maximal ideals of this ring are equidimensional. Therefore we obtain that $B$ is locally equidimensional.
\end{proof}

\begin{lemma}\label{lemmadimSdimR}
    Let $R$ be a local Noetherian ring with the maximal ideal $\mathfrak{m}$. Let $R\rightarrow S$ be a flat ring morphism such that $S$ is Noetherian and $\mathfrak{m}S\subset  \mathrm{rad}(S)$. Then
    $$\dim S = \dim R + \dim (S/\mathfrak{m}S).$$
\end{lemma}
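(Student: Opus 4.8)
The plan is to reduce to the classical dimension formula for a flat local homomorphism of Noetherian local rings, applied at each maximal ideal of $S$, and then to reassemble the local data by taking suprema.

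First I would extract the structural meaning of the hypothesis $\mathfrak{m}S\subset\mathrm{rad}(S)$. Every maximal ideal $\mathfrak{n}$ of $S$ contains $\mathrm{rad}(S)$, hence contains $\mathfrak{m}S$; therefore $\mathfrak{n}\cap R$ is a prime ideal of $R$ containing the maximal ideal $\mathfrak{m}$, so $\mathfrak{n}\cap R=\mathfrak{m}$. Consequently, for each maximal ideal $\mathfrak{n}$ of $S$ the composite $R\to S\to S_\mathfrak{n}$ is a flat local homomorphism of Noetherian local rings: flatness is preserved by the localization $S\to S_\mathfrak{n}$, and locality is exactly the statement $\mathfrak{n}\cap R=\mathfrak{m}$.

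Next I would invoke the dimension formula for flat local homomorphisms (see \cite[Theorem 15.1]{Ma1}), where flatness upgrades the general inequality to an equality via the going-down property; this yields, for every maximal ideal $\mathfrak{n}$ of $S$,
$$\dim S_\mathfrak{n}=\dim R+\dim\bigl(S_\mathfrak{n}/\mathfrak{m}S_\mathfrak{n}\bigr).$$
It then remains to pass to suprema. On one side, $\dim S=\sup_\mathfrak{n}\mathrm{ht}\:\mathfrak{n}=\sup_\mathfrak{n}\dim S_\mathfrak{n}$, the supremum being over maximal ideals of $S$, since every prime ideal is contained in a maximal one. On the other side, because $\mathfrak{m}S\subset\mathrm{rad}(S)$ every maximal ideal of $S$ contains $\mathfrak{m}S$, so the maximal ideals of $S/\mathfrak{m}S$ are precisely the images $\mathfrak{n}/\mathfrak{m}S$ and $(S/\mathfrak{m}S)_{\mathfrak{n}/\mathfrak{m}S}\cong S_\mathfrak{n}/\mathfrak{m}S_\mathfrak{n}$; hence $\dim(S/\mathfrak{m}S)=\sup_\mathfrak{n}\dim\bigl(S_\mathfrak{n}/\mathfrak{m}S_\mathfrak{n}\bigr)$. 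Since $R$ is Noetherian local, $\dim R$ is a finite constant that can be pulled out of the supremum, and combining the three displayed identities gives $\dim S=\dim R+\dim(S/\mathfrak{m}S)$.

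I expect the only substantive ingredient to be the flat local dimension formula itself; the rest is bookkeeping with localizations and suprema. The single point that deserves care is the identification of the maximal ideals of $S$ with those of $S/\mathfrak{m}S$, and this is exactly where the full strength of $\mathfrak{m}S\subset\mathrm{rad}(S)$ is needed rather than merely that $\mathfrak{m}S$ be a proper ideal: otherwise a maximal ideal of $S$ not containing $\mathfrak{m}S$ could contribute to $\dim S$ while being invisible in the fiber $S/\mathfrak{m}S$, and the asserted equality would break down.
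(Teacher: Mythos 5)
Your proposal is correct and follows essentially the same route as the paper: both arguments observe that every maximal ideal $\mathfrak{n}$ of $S$ contracts to $\mathfrak{m}$ because $\mathfrak{m}S\subset\mathrm{rad}(S)$, apply the flat local dimension formula \cite[Theorem 15.1]{Ma1} to get $\mathrm{ht}\:\mathfrak{n}=\dim R+\dim(S_\mathfrak{n}/\mathfrak{m}S_\mathfrak{n})$, and then take suprema over maximal ideals, using finiteness of $\dim R$ to pull it out of the supremum. Your additional remarks on identifying the maximal ideals of $S/\mathfrak{m}S$ with those of $S$ are a correct elaboration of a step the paper leaves implicit.
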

\begin{proof}
	Note that $\dim R = \mathrm{ht}\:\mathfrak{m}$ is finite. 
	Let $\mathfrak{M}$ be a maximal ideal of $S$. Since $\mathfrak{m}S\subset \mathrm{rad}(S)$ and $R$ is local, $\mathfrak{M}\cap R = \mathfrak{m}$. By \cite[Theorem 15.1]{Ma1}, 
	$$\mathrm{ht}\: \mathfrak{M} = \dim R + \dim(S_\mathfrak{M}/\mathfrak{m}S_\mathfrak{M}).$$
	Hence
	$$\dim S = \sup_\mathfrak{M}\mathrm{ht}\:\mathfrak{M} = \dim R + \sup_\mathfrak{M}\dim(S_\mathfrak{M}/\mathfrak{m}S_\mathfrak{M}) = \dim R + \dim (S/\mathfrak{m}S).$$
	
\end{proof}

\begin{prop}\label{propDimCDelta}
    Let $R$ be a Noetherian ring and let $\Delta = (\mathfrak{p}_0, \ldots, \mathfrak{p}_n)$ be a flag of prime ideals. Then
    $$\dim C_\Delta R = \mathrm{ht}\: \mathfrak{p}_0 + \dim C_{\tilde{\Delta}}(R/\mathfrak{p}_0),$$
    where $\tilde{\Delta} = \Delta\:\mathrm{mod}\:\mathfrak{p}_0$.
\end{prop}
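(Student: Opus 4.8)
The plan is to view $C_\Delta R$ as a flat algebra over the local ring $R_{\mathfrak{p}_0}$ whose closed fibre is $C_{\tilde{\Delta}}(R/\mathfrak{p}_0)$, and then to read off the dimension from Lemma~\ref{lemmadimSdimR}. Write $\Gamma = \Delta\setminus \mathfrak{p}_0$, so that by definition $C_\Delta R = C_{\mathfrak{p}_0}S^{-1}_{\mathfrak{p}_0}C_\Gamma R$. In particular every element of $R\setminus \mathfrak{p}_0$ is already invertible in $C_\Delta R$, so the structure map $R\to C_\Delta R$ factors through $R_{\mathfrak{p}_0}$ and the localization map induces an isomorphism $R_{\mathfrak{p}_0}\otimes_R C_\Delta R\cong C_\Delta R$.

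First I would verify the hypotheses of Lemma~\ref{lemmadimSdimR} for the map $R_{\mathfrak{p}_0}\to C_\Delta R$. The ring $R_{\mathfrak{p}_0}$ is local Noetherian with maximal ideal $\mathfrak{p}_0 R_{\mathfrak{p}_0}$, and $C_\Delta R$ is Noetherian since it is excellent by Theorem~\ref{CDeltaExcellent}. Flatness of $R_{\mathfrak{p}_0}\to C_\Delta R$ follows from the flatness of $R\to C_\Delta R$ (which holds by \cite[Proposition 3.2.1]{Hu}): for any $R_{\mathfrak{p}_0}$-module $M$, regarded as an $R$-module, the isomorphism above gives $M\otimes_{R_{\mathfrak{p}_0}}C_\Delta R = M\otimes_R C_\Delta R$, and the right-hand functor is exact. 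Finally, $C_\Delta R$ is complete with respect to the $\mathfrak{p}_0 C_\Delta R$-adic topology, so $\mathfrak{p}_0 C_\Delta R\subset \mathrm{rad}(C_\Delta R)$; this is exactly the condition $\mathfrak{m}S\subset \mathrm{rad}(S)$ for $\mathfrak{m}=\mathfrak{p}_0 R_{\mathfrak{p}_0}$, noting that $\mathfrak{m}\,C_\Delta R = \mathfrak{p}_0 C_\Delta R$.

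Applying Lemma~\ref{lemmadimSdimR} then yields $\dim C_\Delta R = \dim R_{\mathfrak{p}_0} + \dim\bigl(C_\Delta R/\mathfrak{p}_0 C_\Delta R\bigr)$. Here $\dim R_{\mathfrak{p}_0} = \mathrm{ht}\:\mathfrak{p}_0$ by the definition of height. For the closed fibre I would use $C_\Delta M = C_\Delta R\otimes_R M$ for finitely generated $M$ with $M = R/\mathfrak{p}_0$, together with the observation that localizing and completing $R/\mathfrak{p}_0$ at $\mathfrak{p}_i$ agrees with doing so at $\mathfrak{p}_i/\mathfrak{p}_0$; this gives $C_\Delta R/\mathfrak{p}_0 C_\Delta R = C_{\tilde{\Delta}}(R/\mathfrak{p}_0)$, exactly the identification already used in the proof of Theorem~\ref{CDeltaRegularNormal}. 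Substituting the two values produces the claimed formula $\dim C_\Delta R = \mathrm{ht}\:\mathfrak{p}_0 + \dim C_{\tilde{\Delta}}(R/\mathfrak{p}_0)$.

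The argument has no genuinely hard step: it is a direct application of Lemma~\ref{lemmadimSdimR}, and the only points requiring care are the two structural identifications, namely the flatness of $R_{\mathfrak{p}_0}\to C_\Delta R$ and the identification of the closed fibre $C_\Delta R/\mathfrak{p}_0 C_\Delta R$ with $C_{\tilde{\Delta}}(R/\mathfrak{p}_0)$. Both reduce to the definition of $C_\Delta$ together with the facts already established above, that $R\to C_\Delta R$ is flat, that $\mathfrak{p}_0 C_\Delta R$ lies in the radical, and that $C_\Delta R$ is Noetherian. Thus the main effort is bookkeeping rather than a new idea.
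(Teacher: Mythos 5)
Your proposal is correct and follows exactly the paper's route: apply Lemma~\ref{lemmadimSdimR} to the flat map $R_{\mathfrak{p}_0}\to C_\Delta R$, using completeness to get $\mathfrak{p}_0 C_\Delta R\subset\mathrm{rad}(C_\Delta R)$ and the identification $C_\Delta R/\mathfrak{p}_0 C_\Delta R = C_{\tilde{\Delta}}(R/\mathfrak{p}_0)$. You simply spell out the verification of the lemma's hypotheses in more detail than the paper does.
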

\begin{proof}
	Note that the morphism $R_{\mathfrak{p}_0}\rightarrow C_\Delta R$
	is flat and $\mathfrak{p}_0C_\Delta R\subset \mathrm{rad}(C_\Delta R)$, since the $C_\Delta R$ is $\mathfrak{p}_0C_\Delta R$-complete. Thus we obtain from lemma \ref{lemmadimSdimR}
	$$\dim C_\Delta R = \dim R_{\mathfrak{p}_0} + \dim C_\Delta R/\mathfrak{p}_0C_\Delta R.$$
	It remains to note that $\mathrm{ht}\:\mathfrak{p}_0 = \dim R_{\mathfrak{p}_0}$ and $C_\Delta R/\mathfrak{p}_0C_\Delta R = C_{\tilde{\Delta}}(R/\mathfrak{p}_0)$.
\end{proof}
\vspace{0.25cm}

\section{ Case $R = \Bbbk[x_1, \ldots, x_m]$}\label{sect2}
\begin{notation}\label{NotationPolynom}
    Let $R = \Bbbk[x_1, \ldots, x_m]$ and let $\mathfrak{q}_i = (x_1, \ldots, x_i)$ for $1\leq i \leq m$, $\mathfrak{q}_0 = (0)$. Let $0\leq k_0 < k_1 <\ldots < k_n\leq m$. Denote $\mathfrak{p}_j = \mathfrak{q}_{k_j}$ for $0\leq j \leq n$. Let $\Delta = (\mathfrak{p}_0, \ldots, \mathfrak{p}_n)$ be a flag of prime ideals of $R$.
\end{notation}
If $k_0 = 0$, then define the following notation
$$S[[x_1, \ldots, x_{k_0}]] := S$$
for a ring $S$.

Note that $R = \Bbbk[x_{k_0 + 1}, \ldots, x_m][x_1, \ldots, x_{k_0}] = (R/\mathfrak{p}_0)[x_1, \ldots, x_{k_0}]$, so there is a natural structure of an $R$-module on $C_{\tilde{\Delta}}(R/\mathfrak{p}_0)[[x_1, \ldots, x_{k_0}]]$,  where $\tilde{\Delta} = \Delta\:\mathrm{mod}\:\mathfrak{p}_0$.

\begin{lemma}\label{PolynomLocalFactor}
    In notation \ref{NotationPolynom}, there is an isomorphism of $R$-algebras
    $$C_\Delta R \cong C_{\tilde{\Delta}}(R/\mathfrak{p}_0)[[x_1, \ldots, x_{k_0}]],$$
    where $\tilde{\Delta} = \Delta\:\mathrm{mod}\:\mathfrak{p}_0$. In particular, $C_\Delta R$ is an integral domain.
\end{lemma}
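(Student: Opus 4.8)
The plan is to argue by induction on $|\Delta| = n+1$, exploiting the coordinate splitting $R = (R/\mathfrak{p}_0)[x_1, \ldots, x_{k_0}]$. Under this splitting $R/\mathfrak{p}_0$ is identified with the subring $\Bbbk[x_{k_0+1}, \ldots, x_m] \subset R$, and $\mathfrak{p}_0 = (x_1, \ldots, x_{k_0})$ becomes the ideal generated by the first $k_0$ variables. Writing $\Gamma = \Delta \setminus \mathfrak{p}_0 = (\mathfrak{p}_1, \ldots, \mathfrak{p}_n)$, the whole statement will be reduced to a single computation describing how the outermost operation $C_{\mathfrak{p}_0}S^{-1}_{\mathfrak{p}_0}$ interacts with a power series ring in $x_1, \ldots, x_{k_0}$.

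The engine of the proof is the following computation, which I would isolate as a lemma. Let $D$ be a Noetherian $(R/\mathfrak{p}_0)$-algebra and let $P$ be either $D[x_1, \ldots, x_{k_0}]$ or $D[[x_1, \ldots, x_{k_0}]]$, regarded as an $R$-algebra through $R \to R/\mathfrak{p}_0 \to D$ with $x_i \mapsto x_i$. Then
$$C_{\mathfrak{p}_0}S^{-1}_{\mathfrak{p}_0}P \;\cong\; \bigl(S^{-1}_{(0)}D\bigr)[[x_1, \ldots, x_{k_0}]].$$
To prove it I would note $P/\mathfrak{p}_0^N P = D[x_1, \ldots, x_{k_0}]/(x_1, \ldots, x_{k_0})^N$ for every $N$ (the polynomial and power series cases agree modulo $(x_1,\ldots,x_{k_0})^N$), so that $C_{\mathfrak{p}_0}S^{-1}_{\mathfrak{p}_0}P = \varprojlim_N S^{-1}_{\mathfrak{p}_0}\bigl(D[x]/(x)^N\bigr)$. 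The key point is that localizing at the image of $R \setminus \mathfrak{p}_0$ affects nothing but the coefficient ring: each $f \in R \setminus \mathfrak{p}_0$ reduces modulo $(x)$ to a nonzero element of $R/\mathfrak{p}_0$, every such element lifts back into $R \setminus \mathfrak{p}_0$ through the subring $R/\mathfrak{p}_0 \subset R$, and the higher order terms of $f$ are nilpotent modulo $(x)^N$; hence inverting the image of $R \setminus \mathfrak{p}_0$ coincides with inverting the image of $(R/\mathfrak{p}_0)\setminus\{0\}$. Thus $S^{-1}_{\mathfrak{p}_0}(D[x]/(x)^N) = (S^{-1}_{(0)}D)[x]/(x)^N$, and passing to the limit produces the power series ring. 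Taking $D = R/\mathfrak{p}_0$ and $P = R$ already settles the base case $|\Delta| = 1$, since then $S^{-1}_{(0)}(R/\mathfrak{p}_0) = \mathrm{Frac}(R/\mathfrak{p}_0) = C_{\tilde\Delta}(R/\mathfrak{p}_0)$ for $\tilde\Delta = ((0))$.

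For the inductive step ($n \ge 1$) I would first rewrite $C_\Gamma R$ in the required shape. Applying the induction hypothesis to the flag $\Gamma$ of $R$ gives $C_\Gamma R \cong C_{\Gamma''}(R/\mathfrak{p}_1)[[x_1, \ldots, x_{k_1}]]$, where $\Gamma'' = \Gamma\:\mathrm{mod}\:\mathfrak{p}_1$; applying it to the flag $\Gamma' = \Gamma\:\mathrm{mod}\:\mathfrak{p}_0$ of the polynomial ring $R/\mathfrak{p}_0$ (whose smallest prime is $\mathfrak{p}_1/\mathfrak{p}_0$, with quotient $(R/\mathfrak{p}_0)/(\mathfrak{p}_1/\mathfrak{p}_0) = R/\mathfrak{p}_1$ and induced flag $\Gamma''$) gives $C_{\Gamma'}(R/\mathfrak{p}_0) \cong C_{\Gamma''}(R/\mathfrak{p}_1)[[x_{k_0+1}, \ldots, x_{k_1}]]$. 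Both invocations are legitimate because $\Gamma$ and $\Gamma'$ have one fewer prime than $\Delta$. Reorganizing the disjoint sets of power series variables and substituting the second isomorphism into the first identifies
$$C_\Gamma R \;\cong\; C_{\Gamma'}(R/\mathfrak{p}_0)\,[[x_1, \ldots, x_{k_0}]],$$
an $R$-algebra isomorphism under which $\mathfrak{p}_0 C_\Gamma R$ corresponds to $(x_1, \ldots, x_{k_0})$. Setting $D = C_{\Gamma'}(R/\mathfrak{p}_0)$ and applying the engine to $P = D[[x_1, \ldots, x_{k_0}]] \cong C_\Gamma R$, I obtain
$$C_\Delta R = C_{\mathfrak{p}_0}S^{-1}_{\mathfrak{p}_0}C_\Gamma R \cong \bigl(S^{-1}_{(0)}C_{\Gamma'}(R/\mathfrak{p}_0)\bigr)[[x_1, \ldots, x_{k_0}]] = C_{\tilde\Delta}(R/\mathfrak{p}_0)[[x_1, \ldots, x_{k_0}]],$$
where the last equality uses $\tilde\Delta = ((0)) \vee \Gamma'$ together with the fact that completion along the zero ideal is trivial, so that $C_{\tilde\Delta}(R/\mathfrak{p}_0) = S^{-1}_{(0)}C_{\Gamma'}(R/\mathfrak{p}_0)$. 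Finally, $C_{\tilde\Delta}(R/\mathfrak{p}_0)$ is a domain, being a localization of the domain $C_{\Gamma'}(R/\mathfrak{p}_0)$ (or directly by Theorem \ref{CDeltaRegularNormal}), so the power series ring $C_\Delta R$ is a domain as well.

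The main obstacle is the engine computation, and specifically the bookkeeping around $S^{-1}_{\mathfrak{p}_0}$: by definition it inverts only the image of $R \setminus \mathfrak{p}_0$, not all of $D \setminus \{0\}$, so one must verify that modulo $(x_1, \ldots, x_{k_0})^N$ this localization touches nothing but the coefficients $D$ and matches precisely the operation $S^{-1}_{(0)}$ hidden inside $C_{\tilde\Delta}(R/\mathfrak{p}_0)$. Once this compatibility is in place the remainder is formal, and the two applications of the inductive hypothesis dovetail because deleting $\mathfrak{p}_0$ and reducing modulo $\mathfrak{p}_0$ both strictly shorten the flag.
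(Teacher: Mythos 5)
Your proof is correct and follows essentially the same route as the paper's: induction on $|\Delta|$, a double application of the inductive hypothesis (to $\Gamma$ in $R$ and to $\Gamma\:\mathrm{mod}\:\mathfrak{p}_0$ in $R/\mathfrak{p}_0$) identifying $C_\Gamma R$ with $C_{\tilde{\Gamma}}(R/\mathfrak{p}_0)[[x_1,\ldots,x_{k_0}]]$, followed by the key observation that applying $S^{-1}_{\mathfrak{p}_0}$ modulo $\mathfrak{p}_0^N$ only localizes the coefficient ring at $(R/\mathfrak{p}_0)\setminus\{0\}$. The paper packages this last step as a sandwich of inclusions $B_0[x_1,\ldots,x_{k_0}]/\mathfrak{p}_0^l \hookrightarrow S^{-1}_{\mathfrak{p}_0}(B[[x_1,\ldots,x_{k_0}]]/\mathfrak{p}_0^l) \hookrightarrow B_0[[x_1,\ldots,x_{k_0}]]/\mathfrak{p}_0^l$, while you compare the two multiplicative sets directly via the decomposition $f=g+h$ with $h$ nilpotent modulo $\mathfrak{p}_0^N$ --- a purely cosmetic difference.
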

\begin{proof}
	If $k_0 = 0$, then the assertion is clear. Consider the case $k_0 > 0$. We will proceed by induction on $|\Delta|$. Let $\Gamma = \Delta\setminus \mathfrak{p}_0$. Then 
	\begin{equation}\label{eqCDeltaprojlim}
	C_\Delta R = \varprojlim_{l}\: S^{-1}_{\mathfrak{p}_0}C_\Gamma R/\mathfrak{p}_0^lS^{-1}_{\mathfrak{p}_0}C_\Gamma R = \varprojlim_{l}\: S^{-1}_{\mathfrak{p}_0}(C_\Gamma R/\mathfrak{p}_0^lC_\Gamma R).
	\end{equation}
	If $\Gamma = \varnothing$, then $C_\Gamma R = R$, and thus $C_\Delta R = \Bbbk(x_{k_0 + 1}, \ldots, x_m)[[x_1, \ldots, x_{k_0}]]$, where $\Bbbk(x_{k_0 + 1}, \ldots, x_m) = \mathrm{Frac}(\Bbbk[x_{k_0 + 1}, \ldots, x_m])$, where we assume that $\Bbbk(x_{k_0 + 1}, \ldots, x_m) = \Bbbk$ if $k_0 = m$ and $n = 0$. Note that $C_{\tilde{\Delta}}(R/\mathfrak{p}_0) \cong  \Bbbk(x_{k_0 + 1}, \ldots, x_m)$ in this case. 
	
	Assume $\Gamma\neq \varnothing$, i.e. $n > 0$. By induction hypothesis, 
	$$C_\Gamma R \cong C_{\widehat{\Gamma}}(R/\mathfrak{p}_1)[[x_1, \ldots, x_{k_1}]],$$
	where $\widehat{\Gamma} = \Gamma\:\mathrm{mod}\: \mathfrak{p}_1$. Applying the induction hypothesis for the ring $R/\mathfrak{p}_0 = \Bbbk[x_{k_0 + 1}, \ldots, x_m]$ and the flag $\tilde{\Gamma} = \Gamma\:\mathrm{mod}\: \mathfrak{p}_0$, we obtain an isomorphism of $R/\mathfrak{p}_0$-algebras
	$$C_{\tilde{\Gamma}}(R/\mathfrak{p}_0)\cong C_{\widehat{\Gamma}}(R/\mathfrak{p}_1)[[x_{k_0 + 1}, \ldots, x_{k_1}]].$$
	Thus
	$$C_\Gamma R \cong C_{\widehat{\Gamma}}(R/\mathfrak{p}_1)[[x_1, \ldots, x_{k_1}]] = C_{\widehat{\Gamma}}(R/\mathfrak{p}_1)[[x_{k_0 + 1}, \ldots, x_{k_1}]][[x_1, \ldots, x_{k_0}]]\cong C_{\tilde{\Gamma}}(R/\mathfrak{p}_0)[[x_1, \ldots, x_{k_0}]]$$
	is an isomorphism of $R$-algebras. 
	
	Denote $B = C_{\tilde{\Gamma}}(R/\mathfrak{p}_0)$ and $B_0 = B_{(0)}$, where the latter ring is a localization of $B$ at the zero ideal of the ring $R/\mathfrak{p}_0$. We claim that there is an isomorphism of $R$-algebras
	$$S^{-1}_{\mathfrak{p}_0}(B[[x_1, \ldots, x_{k_0}]]/\mathfrak{p}_0^lB[[x_1, \ldots, x_{k_0}]]) \cong B_0[[x_1, \ldots, x_{k_0}]]/\mathfrak{p}_0^lB_0[[x_1, \ldots, x_{k_0}]]$$
	for $l\geq 1$.
	
	Let $f\in R\setminus \mathfrak{p}_0$. This means that there exists elements $g\in \Bbbk[x_{k_0 + 1}, \ldots, x_m]$ and $h\in \mathfrak{p}_0$ such that $f = g + h$. Since $g$ is invertible in $B_0$, $f$ is invertible in $B_0[[x_1, \ldots, x_{k_0}]]$. Therefore there exists an inclusion of $R$-algebras
	$$S^{-1}_{\mathfrak{p}_0}(B[[x_1, \ldots, x_{k_0}]])\longhookrightarrow B_0[[x_1, \ldots, x_{k_0}]].$$
	Since every non-zero element of $R/\mathfrak{p}_0 = \Bbbk[x_{k_0 + 1}, \ldots, x_m]$ can be naturally considered as an element of $R\setminus \mathfrak{p}_0$, there is an inclusion of $R$-algebras
	$$B_0[x_1, \ldots, x_{k_0}]\longhookrightarrow S^{-1}_{\mathfrak{p}_0}(B[[x_1, \ldots, x_{k_0}]]).$$
	Since $B\hookrightarrow B_0$ and $\mathfrak{p}_0B_0[[x_1, \ldots, x_{k_0}]] = (x_1, \ldots, x_{k_0})$, one can see 	that 
	\begin{align*}
	S^{-1}_{\mathfrak{p}_0}(B[[x_1, \ldots, x_{k_0}]])\cap \mathfrak{p}^l_0B_0[[x_1, \ldots, x_{k_0}]] &= S^{-1}_{\mathfrak{p}_0}(B[[x_1, \ldots, x_{k_0}]]\cap \mathfrak{p}^l_0B_0[[x_1, \ldots, x_{k_0}]]) \\
	&= S^{-1}_{\mathfrak{p}_0}(\mathfrak{p}^l_0B[[x_1, \ldots, x_{k_0}]])
	\end{align*}
	for $l\geq 1$. Since 
	$$B_0[x_1, \ldots, x_{k_0}]\cap \mathfrak{p}^l_0B_0[[x_1, \ldots, x_{k_0}]] = \mathfrak{p}^l_0B_0[x_1, \ldots, x_{k_0}],$$
	there is a sequence of inclusions
	\begin{align*}
	B_0[x_1, \ldots, x_{k_0}]/\mathfrak{p}^l_0B_0[x_1, \ldots, x_{k_0}]&\longhookrightarrow S^{-1}_{\mathfrak{p}_0}(B[[x_1, \ldots, x_{k_0}]]/\mathfrak{p}^l_0B[[x_1, \ldots, x_{k_0}]])\\
	&\longhookrightarrow B_0[[x_1, \ldots, x_{k_0}]]/\mathfrak{p}^l_0B_0[[x_1, \ldots, x_{k_0}]],
	\end{align*}
	and since the composition is an isomorphism, we obtain the required isomorphism.
	
	Therefore we obtain
	\begin{align*}
	C_\Delta R &= \varprojlim_{l}\: S^{-1}_{\mathfrak{p}_0}(C_\Gamma R/\mathfrak{p}_0^lC_\Gamma R) = \varprojlim_l \: S^{-1}_{\mathfrak{p}_0}(B[[x_1, \ldots, x_{k_0}]]/\mathfrak{p}^l_0B[[x_1, \ldots, x_{k_0}]]) \\
	&= \varprojlim_l\: B_0[[x_1, \ldots, x_{k_0}]]/\mathfrak{p}^l_0B_0[[x_1, \ldots, x_{k_0}]] = B_0[[x_1, \ldots, x_{k_0}]], \tag{\theeq}\label{eqCDeltaSeries}
	\end{align*}
	and it remains to note that $B_0 = S^{-1}_{(0)}B = S^{-1}_{(0)}C_{\tilde{\Gamma}}(R/\mathfrak{p}_0) = C_{\tilde{\Delta}}(R/\mathfrak{p}_0)$.
	
	The fact that $C_\Delta R$ is an integral domain follows from theorem \ref{CDeltaRegularNormal}, since $R$ and each $R/\mathfrak{p}_i$ are polynomial algebras over a field. One can also deduce this directly from \eqref{eqCDeltaSeries} by induction on the length of the flag.	
\end{proof}

The following proposition is a generalization of  \cite[Theorem 2]{Ma3}.

\begin{prop}\label{propDimS-10}
    In notation \ref{NotationPolynom}, if $k_0 > 0$, there is an equality
    $$\dim S^{-1}_{(0)}C_\Delta R = \dim C_\Delta R - 1.$$
\end{prop}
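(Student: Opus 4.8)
The plan is to prove the two inequalities separately, working throughout with $D := C_\Delta R$. The first move is to invoke Lemma \ref{PolynomLocalFactor} to write $D \cong B_0[[x_1,\ldots,x_{k_0}]]$, where $B_0 = C_{\tilde\Delta}(R/\mathfrak p_0)$ is a Noetherian domain, so that $\dim D = \dim B_0 + k_0$. By Theorem \ref{CDeltaExcellent} and Lemma \ref{PolynomLocalFactor} the ring $D$ is an excellent domain, hence catenary; I will use repeatedly that for a catenary local domain $A$ one has $\mathrm{ht}\,\mathfrak q + \dim(A/\mathfrak q) = \dim A$ for every prime $\mathfrak q$. Finally I record the translation $\dim S^{-1}_{(0)}D = \sup\{\mathrm{ht}\,\mathfrak P : \mathfrak P\in\mathrm{Spec}\,D,\ \mathfrak P\cap R = (0)\}$, valid because localizing at $(0)$ keeps exactly the primes disjoint from $R\setminus\{0\}$ and preserves their heights.

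For the upper bound, take a prime $\mathfrak P$ with $\mathfrak P\cap R=(0)$. Since $k_0>0$, the element $x_1\in\mathfrak p_0$ lies in $\mathrm{rad}(D)$ (as $D$ is $\mathfrak p_0 D$-adically complete) but not in $\mathfrak P$; hence $\mathfrak P$ is strictly contained in every maximal ideal $\mathfrak M\supseteq\mathfrak P$, so $\dim (D/\mathfrak P)_\mathfrak M\geq 1$. Applying the dimension formula in the catenary local domain $D_\mathfrak M$ gives $\mathrm{ht}(\mathfrak P D_\mathfrak M)\leq \dim D_\mathfrak M-1\leq \dim D-1$, and taking the supremum over $\mathfrak M$ yields $\mathrm{ht}\,\mathfrak P\leq\dim D-1$.

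The heart of the proof, and the step I expect to be the main obstacle, is the matching lower bound, where I construct an explicit prime of height $\dim D-1$ contracting to $(0)$; this is where the ideas of \cite{Ma3} enter. I will choose a maximal ideal $\mathfrak n$ of $B_0$ with $\mathrm{ht}\,\mathfrak n=\dim B_0$ (such $\mathfrak n$ exists, being an ideal containing the top of a maximal chain of primes). The key observation is that because $B_0 = C_{\tilde\Delta}(R/\mathfrak p_0)$ already inverts all nonzero elements of $R/\mathfrak p_0$ (the first ideal of $\tilde\Delta$ is the zero ideal), \emph{every} prime of $B_0$, and in particular $\mathfrak n$, contracts to $(0)$ in $R/\mathfrak p_0$; thus $R/\mathfrak p_0=\Bbbk[x_{k_0+1},\ldots,x_m]$ embeds into the residue field $E:=B_0/\mathfrak n$, so the images $\overline{x_{k_0+1}},\ldots,\overline{x_m}\in E$ are algebraically independent over $\Bbbk$. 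I then define a continuous $\Bbbk$-algebra homomorphism $\phi\colon D=B_0[[x_1,\ldots,x_{k_0}]]\to E[[t]]$ by letting $B_0$ act through $B_0\twoheadrightarrow E\hookrightarrow E[[t]]$, setting $\phi(x_1)=t$, and sending $x_2,\ldots,x_{k_0}$ to series $f_2,\ldots,f_{k_0}\in tE[[t]]$ chosen algebraically independent over $E(t)$, which is possible since $\mathrm{trdeg}_{E(t)}E((t))=\infty$. Because $\phi(x_1)=t$, the map $\phi$ is surjective, so $D/\ker\phi\cong E[[t]]$ is one-dimensional; and because $\{t,f_2,\ldots,f_{k_0}\}$ are algebraically independent over $E\supseteq\Bbbk(\overline{x_{k_0+1}},\ldots,\overline{x_m})$, the $m$ images of $x_1,\ldots,x_m$ are algebraically independent over $\Bbbk$, whence $\phi|_R$ is injective and $\mathfrak P:=\ker\phi$ satisfies $\mathfrak P\cap R=(0)$.

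It then remains to compute the height of $\mathfrak P$, and here the surjectivity pays off: a short computation of constant terms identifies $\phi^{-1}((t))=\mathfrak n D+(x_1,\ldots,x_{k_0})=:\mathfrak M$, a maximal ideal of $D$ containing $\mathfrak P$ with $\mathrm{ht}\,\mathfrak M=\mathrm{ht}\,\mathfrak n+k_0=\dim B_0+k_0=\dim D$. Localizing at $\mathfrak M$ and using that $D_\mathfrak M/\mathfrak P D_\mathfrak M\cong E[[t]]_{(t)}$ is one-dimensional, the catenary dimension formula gives $\mathrm{ht}(\mathfrak P D_\mathfrak M)=\dim D_\mathfrak M-\dim(D_\mathfrak M/\mathfrak P D_\mathfrak M)=\dim D-1$, so $\mathrm{ht}\,\mathfrak P=\dim D-1$. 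This completes the lower bound and the proof. I note that the freedom in choosing $f_2,\ldots,f_{k_0}$ (and, when $\Delta$ is not saturated, in choosing $\mathfrak n$) yields infinitely many such primes, matching the remark in the introduction.
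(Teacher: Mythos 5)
Your proposal is correct and follows essentially the same route as the paper's proof: decompose $C_\Delta R\cong C_{\tilde\Delta}(R/\mathfrak{p}_0)[[x_1,\ldots,x_{k_0}]]$ via Lemma \ref{PolynomLocalFactor}, choose a maximal ideal of maximal height in the coefficient ring (which necessarily contracts to $(0)$ in $R/\mathfrak{p}_0$), and surject onto a one-variable power series ring over its residue field using algebraically independent series with zero constant term, so that the kernel is a prime of height $\dim C_\Delta R-1$ meeting $R$ in $(0)$. The only cosmetic differences are that you compute $\mathrm{ht}\,\mathfrak{P}$ as $\dim D_{\mathfrak{M}}-\dim (D/\mathfrak{P})_{\mathfrak{M}}$ in the catenary local domain $D_{\mathfrak{M}}$ rather than through the intermediate ideal $\mathfrak{M}[[x_1,\ldots,x_{k_0}]]$, and you spell out the easy upper bound in more detail than the paper's one-line radical argument.
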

\begin{proof}
	By lemma \ref{PolynomLocalFactor},
	$$C_\Delta R \cong C_{\tilde{\Delta}}(R/\mathfrak{p}_0)[[x_1,\ldots, x_{k_0}]].$$
	Let $\mathfrak{M}$ be a maximal ideal of $C_{\tilde{\Delta}}(R/\mathfrak{p}_0)$ of height $\dim C_{\tilde{\Delta}}(R/\mathfrak{p}_0)$. Let $K = C_{\tilde{\Delta}}(R/\mathfrak{p}_0)/\mathfrak{M}$. Since $\tilde{\Delta} = (0, \ldots)$, the ideal $\mathfrak{M}$ intersects with $R/\mathfrak{p}_0 = \Bbbk[x_{k_0 + 1}, \ldots, x_m]$ by the zero ideal, and thus $R/\mathfrak{p}_0\hookrightarrow K$. 
	
	Let $u_1 = y, u_2, \ldots, u_{k_0}$ be algebraically independent elements of $K[[y]]$ over $K$ with zero constant terms. Such elements exist, since $\mathrm{tr.deg}_KK[[y]] = \infty$. Consider the composition map
	$$C_\Delta R\cong C_{\tilde{\Delta}}(R/\mathfrak{p}_0)[[x_1,\ldots, x_{k_0}]] \longrightarrow K[[x_1,\ldots, x_{k_0}]] \longrightarrow K[[y]],$$
	where the first map is a factorization of the coefficient ring by the ideal $\mathfrak{M}$ and the second map is given by $x_i\mapsto u_i$ for $1\leq i \leq k_0$. By construction, this map is surjective. Let $\mathfrak{P}$ be the kernel of this map.
	
	Let $f\in R\cap \mathfrak{P}$. Since there is a natural isomorphism of rings $R\cong (R/\mathfrak{p}_0)[x_1, \ldots, x_{k_0}]$, we can consider $f$ as a polynomial $F(x_1, \ldots, x_{k_0}) \in (R/\mathfrak{p}_0)[x_1, \ldots, x_{k_0}]$. Thus $\bar{F}(u_1, \ldots, u_{k_0}) = 0$ in $K[[y]]$, where $\bar{F} = F\:\mathrm{mod}\: \mathfrak{M}$. Since $u_1, \ldots, u_{k_0}$ are algebraically independent over $K$, we obtain that coefficients of $\bar{F}$ are zero in $K$. Since $R/\mathfrak{p}_0\hookrightarrow K$, we obtain that $F = 0$ in $(R/\mathfrak{p}_0)[x_1, \ldots, x_{k_0}]$, and thus $f = 0$. This means that $R\cap \mathfrak{P} = (0)$, and therefore $\mathfrak{P}S^{-1}_{(0)}C_\Delta R$ is a prime ideal of $S^{-1}_{(0)}C_\Delta R$.
	
	Observe that
	$$\mathrm{ht}\ker(K[[x_1,\ldots, x_{k_0}]] \rightarrow K[[y]]) = k_0 - 1,$$
	since the ring $K[[x_1, \ldots, x_{k_0}]]$ is local and the map is surjective. Note that 
	$$\ker(K[[x_1,\ldots, x_{k_0}]] \rightarrow K[[y]]) = \mathfrak{P}/\mathfrak{M}[[x_1, \ldots, x_{k_0}]].$$
	Note also that the height of the ideal $\mathfrak{M}[[x_1, \ldots, x_{k_0}]]$ in the ring $C_{\tilde{\Delta}}(R/\mathfrak{p}_0)[[x_1,\ldots, x_{k_0}]]$ equals the height of the ideal $\mathfrak{M}$ in the ring $C_{\tilde{\Delta}}(R/\mathfrak{p}_0)$. Then, since $C_\Delta R$ is catenary and locally equidimensional by theorem \ref{CDeltaExcellent} and theorem \ref{CDeltaLocallyEquidim}, we have
	$$\mathrm{ht}\:\mathfrak{P} = \mathrm{ht}\:\mathfrak{M} + \mathrm{ht}(\mathfrak{P}/\mathfrak{M}[[x_1, \ldots, x_{k_0}]]) = \dim C_{\tilde{\Delta}}(R/\mathfrak{p}_0) + k_0 - 1 = \dim C_\Delta R - 1,$$
	where the last equality follows from proposition \ref{propDimCDelta}, since $\mathrm{ht}\:\mathfrak{p}_0 = k_0$. Since $\mathfrak{p}_0C_\Delta R\subset \mathrm{rad}(C_\Delta R)$ and $\mathfrak{p}_0\neq (0)$, we obtain that $\dim C_\Delta R\geq \dim S^{-1}_{(0)}C_\Delta R + 1$. But $\mathfrak{P}\cap R = (0)$ and $\mathrm{ht}\:\mathfrak{P} = \dim C_\Delta R - 1$, whence we obtain the desired equality.
\end{proof}

\begin{prop}\label{DimCDeltaPolynom}
    In notation \ref{NotationPolynom}, there is an equality
    $$\dim C_\Delta R = \mathrm{ht}\:\mathfrak{p}_n - n.$$
\end{prop}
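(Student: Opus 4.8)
The plan is to induct on the length $n$ of the flag, peeling off the bottom prime $\mathfrak{p}_0$ with proposition \ref{propDimCDelta} and then using proposition \ref{propDimS-10} to absorb the localization that this produces. Recall that in the polynomial ring $\mathrm{ht}\:\mathfrak{q}_i = i$, so $\mathrm{ht}\:\mathfrak{p}_j = k_j$; the target equality is thus $\dim C_\Delta R = k_n - n$.

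For the base case $n = 0$ we have $\Delta = (\mathfrak{p}_0)$ with $\mathfrak{p}_0 = \mathfrak{q}_{k_0}$. By proposition \ref{propDimCDelta}, $\dim C_\Delta R = \mathrm{ht}\:\mathfrak{p}_0 + \dim C_{\tilde{\Delta}}(R/\mathfrak{p}_0)$, and since $\tilde{\Delta} = ((0))$ over $R/\mathfrak{p}_0 = \Bbbk[x_{k_0 + 1}, \ldots, x_m]$, the ring $C_{\tilde{\Delta}}(R/\mathfrak{p}_0)$ is the fraction field $\Bbbk(x_{k_0 + 1}, \ldots, x_m)$, which has dimension $0$. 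Hence $\dim C_\Delta R = \mathrm{ht}\:\mathfrak{p}_0 = k_0$, which equals $\mathrm{ht}\:\mathfrak{p}_n - n$ since $\mathfrak{p}_n = \mathfrak{p}_0$ and $n = 0$ here.

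For the inductive step, take $n > 0$, set $R' = R/\mathfrak{p}_0 = \Bbbk[x_{k_0 + 1}, \ldots, x_m]$ and $\tilde{\Delta} = \Delta\:\mathrm{mod}\:\mathfrak{p}_0$. Proposition \ref{propDimCDelta} gives
$$\dim C_\Delta R = \mathrm{ht}\:\mathfrak{p}_0 + \dim C_{\tilde{\Delta}}(R').$$
The flag $\tilde{\Delta} = ((0), \mathfrak{p}_1/\mathfrak{p}_0, \ldots, \mathfrak{p}_n/\mathfrak{p}_0)$ over $R'$ begins with the zero ideal, so writing $\tilde{\Gamma} = \tilde{\Delta}\setminus (0)$, i.e. $\tilde{\Delta} = (0)\vee\tilde{\Gamma}$, and using that completion at the zero ideal is trivial, we obtain $C_{\tilde{\Delta}}(R') = S^{-1}_{(0)}C_{\tilde{\Gamma}}(R')$. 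Now $\tilde{\Gamma}$ is a flag over $R'$ in the sense of notation \ref{NotationPolynom} whose smallest index is $k_1 - k_0 > 0$, so proposition \ref{propDimS-10} applies and yields $\dim C_{\tilde{\Delta}}(R') = \dim C_{\tilde{\Gamma}}(R') - 1$.

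Finally, $\tilde{\Gamma}$ has length $n - 1$, so the induction hypothesis gives $\dim C_{\tilde{\Gamma}}(R') = \mathrm{ht}(\mathfrak{p}_n/\mathfrak{p}_0) - (n - 1)$, with the height computed in $R'$. Combining the three relations,
$$\dim C_\Delta R = \mathrm{ht}\:\mathfrak{p}_0 + \mathrm{ht}(\mathfrak{p}_n/\mathfrak{p}_0) - n = k_0 + (k_n - k_0) - n = k_n - n = \mathrm{ht}\:\mathfrak{p}_n - n,$$
which is the claim. Since the substantive dimension count — that of the generic formal fiber — is already carried by proposition \ref{propDimS-10}, I expect the only care needed here to be bookkeeping of the reduction modulo $\mathfrak{p}_0$: namely verifying that $\tilde{\Gamma}$ has positive smallest index so that proposition \ref{propDimS-10} is applicable, and that the outer $C_{(0)}$ collapses so that $C_{\tilde{\Delta}}(R')$ is exactly the localization $S^{-1}_{(0)}C_{\tilde{\Gamma}}(R')$.
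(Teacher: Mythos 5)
Your proof is correct and follows essentially the same route as the paper's: induction on $n$, peeling off $\mathfrak{p}_0$ via proposition \ref{propDimCDelta} and applying proposition \ref{propDimS-10} to the flag $\tilde{\Gamma}$ over $R/\mathfrak{p}_0$ (whose smallest index $k_1-k_0$ is indeed positive) before invoking the induction hypothesis. Your extra bookkeeping in the base case and the explicit check that $C_{\tilde{\Delta}}(R')=S^{-1}_{(0)}C_{\tilde{\Gamma}}(R')$ are fine and match what the paper leaves implicit.
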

\begin{proof}
	We will proceed by induction on $n$. If $n = 0$, then $C_\Delta R$ is a local ring of dimension $\mathrm{ht}\: \mathfrak{p}_0$. Thus we can assume $n > 0$. 
	
	By proposition \ref{propDimCDelta},
	$$\dim C_\Delta R = \mathrm{ht}\:\mathfrak{p}_0 + \dim C_{\tilde{\Delta}}(R/\mathfrak{p}_0).$$
	Denote $\Gamma = \Delta\setminus \mathfrak{p}_0$. Then $\tilde{\Delta} = (0, \ldots)$ and $\tilde{\Delta}\setminus 0 = \tilde{\Gamma}$. By proposition \ref{propDimS-10}, $\dim C_{\tilde{\Delta}}(R/\mathfrak{p}_0) = \dim C_{\tilde{\Gamma}}(R/\mathfrak{p}_0) - 1$. By induction hypothesis, $\dim C_{\tilde{\Gamma}}(R/\mathfrak{p}_0) = \mathrm{ht}(\mathfrak{p}_n/\mathfrak{p}_0) - (n - 1) = \mathrm{ht}\:\mathfrak{p}_n - \mathrm{ht}\:\mathfrak{p}_0 - n + 1$, where the last equality holds since $R$ is a polynomial ring over field. Therefore
	$$\dim C_\Delta R = \mathrm{ht}\:\mathfrak{p}_0 + \dim C_{\tilde{\Delta}}(R/\mathfrak{p}_0) = \mathrm{ht}\:\mathfrak{p}_0 + \mathrm{ht}\:\mathfrak{p}_n - \mathrm{ht}\:\mathfrak{p}_0 - n + 1 - 1 = \mathrm{ht}\:\mathfrak{p}_n - n.$$
\end{proof}

\begin{prop}\label{propSemilocalPolynom}
     In notation \ref{NotationPolynom}, $C_\Delta R$ is semilocal if and only if $\Delta$ is saturated.
\end{prop}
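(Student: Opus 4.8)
The plan is to induct on $n$, reduce to the case $\mathfrak{p}_0 = (0)$, and then prove the two implications separately; the entire difficulty will concentrate in exhibiting infinitely many maximal ideals when $\Delta$ is not saturated. First I would reduce to $\mathfrak{p}_0 = (0)$. By Lemma \ref{PolynomLocalFactor} we have $C_\Delta R \cong B_0[[x_1, \ldots, x_{k_0}]]$ with $B_0 = C_{\tilde{\Delta}}(R/\mathfrak{p}_0)$. In any power series ring $A[[t_1, \ldots, t_r]]$ each $t_i$ lies in the Jacobson radical (an element is a unit iff its constant term is a unit in $A$), so every maximal ideal contains $(t_1, \ldots, t_r)$ and contraction yields a bijection $\mathrm{Max}(A[[t_1, \ldots, t_r]]) \cong \mathrm{Max}(A)$. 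Hence $C_\Delta R$ is semilocal iff $B_0$ is. Since, moreover, $\Delta$ is saturated iff $\tilde{\Delta} = \Delta\,\mathrm{mod}\,\mathfrak{p}_0$ is (primes strictly between $\mathfrak{p}_i$ and $\mathfrak{p}_{i+1}$ correspond to primes strictly between their images), replacing $(R,\Delta)$ by $(R/\mathfrak{p}_0, \tilde{\Delta})$ lets me assume $k_0 = 0$, so that $\Delta$ is saturated iff $k_j = j$ for all $j$, i.e. iff $k_n = n$.

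For the implication ``saturated $\Rightarrow$ semilocal'' I would simply count dimensions: if $k_n = n$, then Proposition \ref{DimCDeltaPolynom} gives $\dim C_\Delta R = k_n - n = 0$, and a Noetherian (indeed excellent, by Theorem \ref{CDeltaExcellent}) ring of Krull dimension $0$ is Artinian, hence semilocal. The base case $n=0$ falls here as well, since $C_\Delta R = R^\wedge_{\mathfrak{p}_0}$ is complete local and a one-term flag is vacuously saturated.

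The hard direction is ``not saturated $\Rightarrow$ not semilocal,'' i.e. $k_n > n$, where I must construct infinitely many maximal ideals. Write $\Gamma = \Delta \setminus \mathfrak{p}_0$, so $C_\Delta R = S^{-1}_{(0)} C_\Gamma R$ and $C_\Gamma R \cong D[[x_1, \ldots, x_{k_1}]]$ with $D = C_{\tilde{\Gamma}}(R/\mathfrak{p}_1)$; note that every maximal ideal $\mathfrak{m}$ of $D$ meets $R/\mathfrak{p}_1$ in $(0)$, because $\tilde{\Gamma}$ begins with $(0)$ and so $D$ inverts all of $(R/\mathfrak{p}_1)\setminus\{0\}$. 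If $k_1 \geq 2$ (the first gap sits at the bottom) I would mimic the proof of Proposition \ref{propDimS-10}: fix a maximal ideal $\mathfrak{M}$ of $D$, set $K = D/\mathfrak{M}$, and for power series $u_1 = y, u_2, \ldots, u_{k_1} \in K[[y]]$ that are algebraically independent over $K$ with zero constant terms (these exist since $\mathrm{tr.deg}_K K[[y]] = \infty$) form the surjection $C_\Gamma R \to K[[y]]$, $x_i \mapsto u_i$, with kernel $\mathfrak{P}$. Algebraic independence forces $\mathfrak{P} \cap R = (0)$, so $\mathfrak{P}$ extends to a prime $\mathfrak{P}'$ of $C_\Delta R$, and since $x_1 \mapsto y$, inverting the image of $R\setminus\{0\}$ inverts $y$, whence $C_\Delta R/\mathfrak{P}' \cong K((y))$ is a field and $\mathfrak{P}'$ is maximal. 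To obtain infinitely many I would vary $u_2$ along $w_2 + z_j$ for a single algebraically independent system $y, w_2, \ldots, w_{k_1}, z_1, z_2, \ldots$ of such series; one checks that $y, w_2 + z_j, w_3, \ldots, w_{k_1}$ stay algebraically independent. The most delicate step of the whole proof is the distinctness of the resulting $\mathfrak{P}'$: if two of the surjections shared a kernel they would differ by a continuous $K$-automorphism $\sigma$ of $K[[y]]$ with $\sigma(y) = y$ (both send $x_1 \mapsto y$), forcing $\sigma = \mathrm{id}$ and hence $z_j = z_{j'}$. Using the $z_j$ rather than scalars keeps this valid even when $\Bbbk$ is finite.

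If $k_1 = 1$, the bottom step is already saturated, so the gap lies higher and $\tilde{\Gamma}$ is a non-saturated coordinate flag in the polynomial ring $R/\mathfrak{p}_1$ of top index $n-1$. By the induction hypothesis $D$ is not semilocal, so it has infinitely many maximal ideals $\mathfrak{m}$. For each of them $\mathfrak{m}[[x_1]]$ is prime in $C_\Gamma R = D[[x_1]]$, meets $R$ in $(0)$ (again since $\mathfrak{m} \cap (R/\mathfrak{p}_1) = (0)$), and its extension $\mathfrak{P}_{\mathfrak{m}}$ to $C_\Delta R$ has quotient $S^{-1}_{(0)}\big((D/\mathfrak{m})[[x_1]]\big) = (D/\mathfrak{m})((x_1))$, a field; thus $\mathfrak{P}_{\mathfrak{m}}$ is maximal, and $\mathfrak{P}_{\mathfrak{m}} \cap D = \mathfrak{m}$ shows these are pairwise distinct. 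In either subcase $C_\Delta R$ has infinitely many maximal ideals, which closes the induction.
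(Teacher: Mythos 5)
Your proof is correct and follows the same overall strategy as the paper's: reduce to $\mathfrak{p}_0 = (0)$, induct on the length of the flag, peel off a saturated bottom step (the case $k_1 = 1$, where $S^{-1}_{(0)}C_\Gamma R$ becomes a Laurent series ring over $D$), and, when $k_1 \geq 2$, manufacture infinitely many maximal ideals as kernels of surjections $C_\Gamma R \to K[[y]]$ built from algebraically independent power series. You diverge from the paper in three local but genuine ways. First, for ``saturated $\Rightarrow$ semilocal'' you use Proposition \ref{DimCDeltaPolynom} to see that the ring has dimension $0$, hence is Artinian; the paper instead cites Yekutieli's result that $C_\Delta R$ is complete semilocal for saturated flags, so your route is more self-contained given the earlier results of the section. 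Second, you certify maximality of the constructed primes by computing the quotient directly ($C_\Delta R/\mathfrak{P}' \cong K((y))$ is a field because $x_1 \mapsto y$ forces $y$ to become invertible), whereas the paper computes $\mathrm{ht}\:\mathfrak{P} = \dim C_\Gamma R - 1$ using catenarity and local equidimensionality; your argument is more elementary. Third, and most substantively, for distinctness of the kernels the paper factors through $K[[y,z]]$, sends $x_{k_1} \mapsto z \mapsto v_r$, and shows $(z - v_i) \neq (z - v_j)$ by a Krull-intersection computation, while you perturb $u_2$ to $w_2 + z_j$ and argue that equal kernels would yield a ring automorphism of $K[[y]]$ fixing $K$ pointwise and fixing $y$, which must be the identity. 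Both work; yours trades an explicit principal-ideal computation for a soft rigidity statement. In a final write-up you should spell out (i) that $y, w_2 + z_j, w_3, \ldots, w_{k_1}$ remain algebraically independent (clear, since the substitution $b \mapsto b + c$ is invertible), and (ii) why the automorphism is forced to be the identity: any ring automorphism of the local ring $K[[y]]$ preserves $(y)$, hence is $(y)$-adically continuous, hence is determined by its restriction to the dense subring $K[y]$, which it fixes.
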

\begin{proof}
If $\Delta$ is saturated, then the semilocality follows from \cite[Corollary 3.3.5]{Ye}. Thus we can assume that $\Delta$ is not saturated. In particular, $|\Delta| > 1$.

Since $\mathfrak{p}_0C_\Delta R\subset \mathrm{rad}(C_\Delta R)$, the semilocality of the ring $C_\Delta R$ is equivalent to the semilocality of the ring $C_\Delta R/\mathfrak{p}_0C_\Delta R = C_{\tilde{\Delta}}(R/\mathfrak{p}_0)$, where $\tilde{\Delta} = \Delta\:\mathrm{mod}\:\mathfrak{p}_0$. Thus we can reduce the assertion to the ring $C_{\tilde{\Delta}}(R/\mathfrak{p}_0)$ or, in other words, we can assume that $R$ is an integral domain and $\mathfrak{p}_0 = (0)$. 

Denote $\Gamma = \Delta\setminus \mathfrak{p}_0$. Then $C_\Delta R = S^{-1}_{(0)}C_\Gamma R$, where
$$C_\Gamma R = C_{\widehat{\Gamma}}(R/\mathfrak{p}_1)[[x_1, \ldots, x_{k_1}]],$$
where $\widehat{\Gamma} = \Gamma\:\mathrm{mod}\:\mathfrak{p}_1$. If $k_1 = 1$ or, in other words, $\mathrm{ht}(\mathfrak{p}_1/\mathfrak{p}_0) = \mathrm{ht}\:\mathfrak{p}_1 = 1$, then $S^{-1}_{(0)}C_\Gamma R = C_{\widehat{\Gamma}}(R/\mathfrak{p}_1)((x_1))$ and $\widehat{\Gamma}$ is not saturated. 
Since different maximal ideals of $C_{\widehat{\Gamma}}(R/\mathfrak{p}_1)$ generate different maximal ideals of $C_{\widehat{\Gamma}}(R/\mathfrak{p}_1)((x_1))$, we can reduce the problem to the case of the ring $R/\mathfrak{p}_1$ and the flag $\widehat{\Gamma}$. We can assume by induction on the length of the flag that the ring $C_{\widehat{\Gamma}}(R/\mathfrak{p}_1)$ is not semilocal, and then so is $C_\Delta R$. 

Consider the remaining case $k_1 > 1$. Let us perform the same construction as in the proof of proposition \ref{propDimS-10}.
Let $\mathfrak{M}$ be a maximal ideal of $C_{\widehat{\Gamma}}(R/\mathfrak{p}_1)$ of height $\dim C_{\widehat{\Gamma}}(R/\mathfrak{p}_1)$. 
Denote $K = C_{\widehat{\Gamma}}(R/\mathfrak{p}_1)/\mathfrak{M}$. Since $\mathrm{tr.deg}_KK[[y]] = \infty$, there exists an infinite sequence of elements $u_1 = y, u_2, \ldots, u_{k_1 - 1}, v_1, v_2, \ldots$ of $K[[y]]$ with zero constant terms which are algebraically independent over $K$. For a positive integer $r$ consider the composition
$$C_\Gamma R = C_{\widehat{\Gamma}}(R/\mathfrak{p}_1)[[x_1, \ldots, x_{k_1}]]\longrightarrow K[[x_1, \ldots, x_{k_1}]] \longrightarrow K[[y, z]]\longrightarrow K[[y]],$$
where the first map is a factorization of the coefficient ring by the ideal $\mathfrak{M}$, the second map is given by $x_j\mapsto u_j$ for $1\leq j < k_1$, $x_{k_1}\mapsto z$, and the third map is given by $y\mapsto y$, $z\mapsto v_r$. Denote by $\mathfrak{r}_r$ the kernel of the third map and denote by $\mathfrak{P}_r$ the kernel of the composition. Since all maps in the above sequence are surjective, we see that $\mathfrak{P}_r = \mathfrak{r}_r\cap C_\Gamma R$. Note also that $\mathfrak{P}_r$ and $\mathfrak{r}_r$ are prime ideals, since $K[[y]]$ is an integral domain. By the same reasoning as in the proof of proposition \ref{propDimS-10}, we see that $\mathfrak{P}_r\cap R = (0)$ and $\mathrm{ht}\:\mathfrak{P}_r = \dim C_\Gamma R - 1$. In other words, $\mathfrak{P}_rC_\Delta R$ is the maximal ideal in $C_\Delta R = S^{-1}_{(0)}C_\Gamma R$.

We have constructed the set $\{\mathfrak{P}_rC_\Delta R\}_{r = 1}^\infty$ of maximal ideals of the ring $C_\Delta R$. To show that $C_\Delta R$ is not semilocal, it is sufficient to show that $\mathfrak{P}_i\neq \mathfrak{P}_j$ in $C_\Gamma R$ for $i\neq j$. Since $\mathfrak{P}_r = \mathfrak{r}_r\cap C_\Gamma R$ for any $r\geq 1$, it is sufficient to prove that $\mathfrak{r}_i\neq \mathfrak{r}_j$ in $K[[y, z]]$ for $i\neq j$.

First, note that $\mathfrak{r}_r = (z - v_r)$ for any $r\geq 1$. If $\mathfrak{r}_i = \mathfrak{r}_j$, then there exists $g\in K[[y, z]]$ such that  $z - v_i = g(z - v_j)$. If $g = \sum_{s = 0}^\infty c_sz^s$ with $c_s\in K[[y]]$, then 
$$z - v_i = \sum_{s = 1}^\infty (c_{s - 1} - c_sv_j)z^s - c_0v_j.$$
We can see that $c_{s - 1} - c_sv_j = 0$ for $s\geq 2$. This means that for any $s\geq 2$ we have $c_{s - 1} = c_sv_j = c_{s + 1}v_j^2 = c_{s + 2}v_j^3 = \ldots$, or 
$$c_{s - 1}\in \bigcap_{d\geq 1}(v_j)^d,$$
where $(v_j)\subset K[[y]]$ is a proper ideal, since $v_j$ has no constant term. Since $K[[y]]$ is a Noetherian integral domain, we obtain that $c_{s - 1} = 0$ for $s\geq 2$ by \cite[Theorem 8.10]{Ma1}. Thus we obtain that
$$z - v_i = c_0z - c_0v_j,$$
from what follows $c_0 = 1$ and $v_i = v_j$. Since $i\neq j$, we obtain a contradiction. Hence $\mathfrak{r}_i \neq \mathfrak{r}_j$
and $C_\Delta R$ is not semilocal.
\end{proof}
\vspace{0.25cm}

\section{General case}\label{sect3}
\begin{notation}\label{NotationGeneral}
Let $R$ be an essentially of finite type algebra over a filed $\Bbbk$ and let $\Delta = (\mathfrak{p}_0, \ldots, \mathfrak{p}_n)$ be a flag of prime ideals of $R$.
\end{notation}

\begin{rem}\label{remFinGenAlg}
	In the sequel, we will take completions of $R$ along the flag $\Delta$, in particular we will take localizations by the prime ideal $\mathfrak{p}_n$ on the first step. If $R = T^{-1}R'$, where $R'$ is a finitely generated $\Bbbk$-algebra and $T$ is a multiplicatively closed subset of $R'$, then localizations of $R$ and $R'$ at prime ideals $\mathfrak{p}_n$ and $\mathfrak{p}_n\cap R'$ coincide. Besides, heights of prime ideals of $R'$ which do not intersect $T$ stay the same after localization. This means that we can consider $R'$ instead of $R$ in the sequel or, in other words, we can assume that $R$ is finitely generated over $\Bbbk$.
\end{rem}

\begin{thm}\label{thmDimCDeltaR}
    In notation \ref{NotationGeneral}, we have
    $$\dim C_\Delta R = \mathrm{ht}\: \mathfrak{p}_0 + \mathrm{ht}(\mathfrak{p}_n/\mathfrak{p}_0) - n.$$
\end{thm}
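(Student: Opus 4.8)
The plan is to reduce the general case to the polynomial case of Proposition~\ref{DimCDeltaPolynom} by means of a Noether normalization adapted to the flag. First, by Remark~\ref{remFinGenAlg} I may assume that $R$ is a finitely generated $\Bbbk$-algebra. Next, Proposition~\ref{propDimCDelta} gives $\dim C_\Delta R = \mathrm{ht}\:\mathfrak p_0 + \dim C_{\tilde\Delta}(R/\mathfrak p_0)$ with $\tilde\Delta = \Delta\:\mathrm{mod}\:\mathfrak p_0$, and since $R/\mathfrak p_0$ is a finitely generated domain over $\Bbbk$ in which $\mathfrak p_n/\mathfrak p_0$ has height $\mathrm{ht}(\mathfrak p_n/\mathfrak p_0)$, the assertion reduces to the case where $R$ is a domain and $\mathfrak p_0 = (0)$; there the claim reads $\dim C_\Delta R = \mathrm{ht}\:\mathfrak p_n - n$. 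I fix this setting.

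Now I would choose a polynomial subring $A = \Bbbk[x_1,\dots,x_m]\hookrightarrow R$ over which $R$ is module-finite, adapted to the flag in the sense that $\mathfrak p_i\cap A = (x_1,\dots,x_{k_i})$ with $k_i = \mathrm{ht}\:\mathfrak p_i$; such an $A$ exists by refining the usual Noether normalization prime by prime. The equality $\mathrm{ht}(\mathfrak p_i\cap A)=\mathrm{ht}\:\mathfrak p_i$ is forced by going-down, which holds because $A$ is normal and $R$ is a domain integral over $A$. Writing $\Delta' = \Delta\cap A$, the flag $\Delta'$ is exactly of the shape of Notation~\ref{NotationPolynom} with $0 = k_0 < k_1 < \dots < k_n\le m$, so Proposition~\ref{DimCDeltaPolynom} yields $\dim C_{\Delta'}A = \mathrm{ht}\:\mathfrak q_{k_n} - n = k_n - n = \mathrm{ht}\:\mathfrak p_n - n$. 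It therefore suffices to prove the single equality $\dim C_\Delta R = \dim C_{\Delta'}A$.

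I would obtain this equality from the statement that the natural map $C_{\Delta'}A\to C_\Delta R$ is module-finite and injective; granting this, the extension is integral and injective, so the dimensions coincide. The comparison would be built by induction on the length of the flag, against the auxiliary ring $C_{\Delta'}A\otimes_A R$, which is module-finite over $C_{\Delta'}A$, faithfully contains it (as $C_{\Delta'}A$ is flat over $A$), and is torsion-free over it (since $R$ embeds into a finite free $A$-module). Two features drive the induction. At the outermost, generic step one has $S^{-1}_{(0)}C_\Gamma R = C_\Gamma R\otimes_A\mathrm{Frac}(A)$, because $R\otimes_A\mathrm{Frac}(A) = \mathrm{Frac}(R)$ already inverts every element of $R\setminus(0)$; this turns the $R$-localization into a base change and preserves both finiteness and, by exactness of localization, injectivity. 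At each inner step the equality $\mathrm{ht}\:\mathfrak p_i = \mathrm{ht}\:\mathfrak q_{k_i}$ forces $\mathfrak p_i$ to be minimal over $\mathfrak q_{k_i}R$, so that $\mathfrak q_{k_i}$ is $\mathfrak p_i$-primary in $R_{\mathfrak p_i}$ and the $\mathfrak q_{k_i}$-adic and $\mathfrak p_i$-adic topologies agree on the relevant localizations; together with the fact that the completion of a finite module is module-finite over the completed base, this is what should make $C_{\mathfrak p_i}S^{-1}_{\mathfrak p_i}$ computable from $C_{\mathfrak q_{k_i}}S^{-1}_{\mathfrak q_{k_i}}$. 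Injectivity then follows from torsion-freeness: a nonzero element of the domain $C_{\Delta'}A$ is a nonzerodivisor on $C_{\Delta'}A\otimes_A R$, hence survives the flat localizations and completions defining $C_\Delta R$.

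The main obstacle is precisely the module-finiteness through the completion steps. The difficulty is that each $C_{\mathfrak p_i}S^{-1}_{\mathfrak p_i}$ interposes a localization at $R\setminus\mathfrak p_i$, strictly larger than $A\setminus\mathfrak q_{k_i}$, before completing, and localization does not preserve finiteness; moreover, when $\Delta'$ is not saturated the ring $C_{\Delta'}A$ has infinitely many maximal ideals by Proposition~\ref{propSemilocalPolynom}, so the $\mathfrak q_{k_i}$-adic completions do not split as finite products and $C_\Delta R$ cannot simply be read off as a direct factor. The key step I expect to require the most care is a comparison lemma showing that, because $\mathfrak p_i$ is minimal over $\mathfrak q_{k_i}R$, completing at $\mathfrak p_i$ after the extra localization returns a module-finite piece of the already-complete algebra $C_{\Delta'}A\otimes_A R$ rather than a genuinely larger ring. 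Once finiteness and injectivity are established, the chain $\dim C_\Delta R = \dim C_{\Delta'}A = \mathrm{ht}\:\mathfrak p_n - n$ finishes the argument.
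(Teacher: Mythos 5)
Your reduction steps (to a finitely generated domain with $\mathfrak{p}_0 = (0)$ via Remark \ref{remFinGenAlg} and Proposition \ref{propDimCDelta}), the choice of a Noether normalization $A = \Bbbk[x_1,\dots,x_m]$ adapted to the flag, and the target equality $\dim C_\Delta R = \dim C_{\Delta\cap A}A$ all match the paper's argument. The gap is exactly where you say you expect it: you never establish the comparison between $C_\Delta R$ and $C_{\Delta\cap A}A$, and the inductive scheme you sketch does not close as written. The obstruction you identify is real --- at each stage the localization at $R\setminus\mathfrak{p}_i$ is strictly larger than at $A\setminus\mathfrak{q}_{k_i}$, several primes of $R$ may lie over $\mathfrak{q}_{k_i}$, and when the flag is not saturated the intermediate rings are not semilocal, so you cannot split the completions into finitely many local factors and track your flag through them step by step. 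Asserting that the completion ``returns a module-finite piece of the already-complete algebra'' is precisely the statement that needs proof, and your proposal stops at announcing it.

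The paper closes this gap by not chasing the tower of localizations and completions at all: it invokes the base-change identity $R\otimes_A C_\Gamma A \cong \prod_{\Delta'\mid\Gamma}C_{\Delta'}R$ (a finite product, by \cite[Proposition 3.1.7]{Ye}), where $\Gamma = \Delta\cap A$. Since $R\otimes_A C_\Gamma A$ is finite over $C_\Gamma A$ and also flat over it --- it is the base change along $\mathrm{Frac}(A)\to C_\Gamma A$ of the $\mathrm{Frac}(A)$-vector space $\mathrm{Frac}(A)\otimes_A R$, which is where $\mathfrak{p}_0=(0)$ is used --- both going-down and going-up hold for $C_\Gamma A\to\prod_{\Delta'}C_{\Delta'}R$. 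Any chain of primes in a finite product lies in a single factor, so both theorems hold for each $C_\Gamma A\to C_{\Delta'}R$ individually, giving $\dim C_{\Delta'}R = \dim C_\Gamma A$ because $C_\Gamma A$ is a domain by Lemma \ref{PolynomLocalFactor}; Proposition \ref{DimCDeltaPolynom} then finishes. If you prefer your formulation via an injective finite map $C_\Gamma A\to C_\Delta R$, that is exactly Remark \ref{remInjective}, but there it is deduced from the product decomposition rather than proved by induction through the completion steps. You should either import that decomposition or prove it; as it stands, your argument is a correct outline of the strategy with its central lemma missing.
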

\begin{proof}
	By remark \ref{remFinGenAlg}, assume that $R$ is a finitely generated $\Bbbk$-algebra. By proposition \ref{propDimCDelta},
	$$\dim C_\Delta R = \mathrm{ht}\:\mathfrak{p}_0 + \dim C_{\tilde{\Delta}}(R/\mathfrak{p}_0).$$
	Thus it is enough to prove that $\dim C_{\tilde{\Delta}}(R/\mathfrak{p}_0) = \mathrm{ht}(\mathfrak{p}_n/\mathfrak{p}_0) - n$. Hence we can assume that $R$ is an integral domain and $\mathfrak{p}_0 = (0)$.
	
	By \cite[Chapter V, \S 3.1, Theorem 1]{Bo}, there exists a polynomial subring $A = \Bbbk[x_0, \ldots, x_m]$ in $R$ such that $R$ is finite over $A$ and $\mathfrak{r}_i = \mathfrak{p}_i\cap A = (x_1, \ldots, x_{k_i})$ for some $0 = k_0  < k_1 <\ldots < k_n$, where we assume that $(0) = (x_1, \ldots, x_{k_0})$ for $k_0 = 0$. By \cite[Theorem 9.4]{Ma1}, $\mathrm{ht}\:\mathfrak{p}_i = \mathrm{ht}\:\mathfrak{r}_i$. Denote $\Gamma = (\mathfrak{r}_0, \ldots, \mathfrak{r}_n)$.
	Note that $\mathrm{Frac}(A)\otimes_A R$ is flat over the field $\mathrm{Frac}(A)$. Thus $(\mathrm{Frac}(A)\otimes_A R)\otimes_A C_\Gamma A$ is flat over $\mathrm{Frac}(A)\otimes_A C_\Gamma A$. 
	Since $\mathfrak{p}_0 = (0)$ and $\mathfrak{r}_0 = (0)$, we have
	$$\mathrm{Frac}(A)\otimes_A C_\Gamma A = C_\Gamma A, \quad (\mathrm{Frac}(A)\otimes_A R)\otimes_A C_\Gamma A = R\otimes_A C_\Gamma A = \prod_{\Delta'\mid \Gamma} C_{\Delta'}R,$$
	where the last equality follows from \cite[Proposition 3.1.7]{Ye}. Note that the product is finite, since $R$ is finite over $A$. By \cite[Theorem 9.5]{Ma1}, the going-down theorem holds between $C_\Gamma A$ and $\prod_{\Delta'\mid \Gamma} C_{\Delta'}R$. But $R$ is finite over $A$ as well as $R\otimes_A C_\Gamma A= \prod_{\Delta'\mid \Gamma} C_{\Delta'}R$ is finite over $C_\Gamma A$, so the going-up theorem holds between $\prod_{\Delta'\mid \Gamma} C_{\Delta'}R$ and $C_\Gamma A$ by \cite[Theorem 9.4]{Ma1}.
	
	Note that any chain of prime ideals $\prod_{\Delta'\mid \Gamma} C_{\Delta'}R$ is a chain of prime ideals in $C_{\Delta'}R$ for some $\Delta'\mid \Gamma$. From this and from the fact that the going-down and the going-up theorems hold for the ring morphism $C_\Gamma A\rightarrow \prod_{\Delta'\mid \Gamma} C_{\Delta'}R$, it follows that the going-down and the going-up theorems hold for the ring morphism $C_\Gamma A\rightarrow C_{\Delta'}R$ for any $\Delta'\mid \Gamma$. Thus $\dim C_\Delta' R = \dim C_\Gamma A$ for any $\Delta'\mid \Gamma$, since $C_\Gamma A$ is an integral domain by lemma \ref{PolynomLocalFactor}. Since $\Delta \mid \Gamma$, we obtain
	$$\dim C_\Delta R = \dim C_\Gamma A = \mathrm{ht}\:\mathfrak{r}_n - n = \mathrm{ht}\:\mathfrak{p}_n - n,$$
	where the second equality follows from proposition \ref{DimCDeltaPolynom}.
\end{proof}

\begin{rem}\label{remInjective}
 We actually proved in theorem \ref{thmDimCDeltaR} that the map $C_\Gamma A\rightarrow C_{\Delta'}R$ is injective and finite for any $\Delta'\mid \Gamma$. Indeed, the finiteness of this map follows from the finiteness of the map $C_\Gamma A\rightarrow \prod_{\Delta'\mid \Gamma} C_{\Delta'}R$, and since the going-down theorem holds and $C_\Gamma A$ is an integral domain by lemma \ref{PolynomLocalFactor}, we deduce the injectivity.
 \end{rem}
 
 \begin{example}
 In general, remark \ref{remInjective} for finite extensions $A\subset R$ of Noetherian domains with $(0)\in \Gamma$, $(0)\in \Delta$ does not hold. For example, take $A = \Bbbk[x, y]/(y^2 - x^3-x^2)$, $R = \Bbbk[t]$ with $\mathrm{char}\:\Bbbk\neq 2$, and the morphism $A\rightarrow R$ is given by $x\mapsto t^2 - 1$, $y\mapsto t(t^2 - 1)$. Note that this morphism is finite. If $\Gamma = \big((x, y)\big)$ is a flag in $A$, then $C_\Gamma A$ is not an integral domain. There are only two flags in $R$ lying over $\Gamma$, namely, $\Delta_1 = \big((t - 1)\big)$ and $\Delta_2 = \big((t + 1)\big)$. Then $C_{\Delta_i} R$ is an integral domain for $i = 1, 2$. Moreover, $R\otimes_A C_\Gamma A = C_{\Delta_1}R\times C_{\Delta_2}R$ and $C_\Gamma A \hookrightarrow C_{\Delta_1}R\times C_{\Delta_2}R$. However, $C_\Gamma A \rightarrow C_{\Delta_i}R$ is not injective for $i = 1, 2$. The same will hold if we take $\Gamma = \big((0), (x, y)\big)$.
\end{example}

\begin{rem}
	For a general Noetherian ring $R$ theorem \ref{thmDimCDeltaR} does not hold. For example, for any $n \geq 3$ and $0\leq d \leq n - 2$ there exists an excellent regular local ring $R$ of dimension $n$ such that the generic formal fiber, which we consider as the ind-pro completion along the flag consisting of the zero ideal and the maximal ideal, has dimension $d$, see \cite{FJLMPS}.
\end{rem}

\begin{cor}\label{corDimCDeltaR}
    In notation \ref{NotationGeneral}, if $R$ is locally equidimensional, then 
    $$\dim C_\Delta R = \mathrm{ht}\: \mathfrak{p}_n - n.$$
\end{cor}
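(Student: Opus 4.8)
The goal is to prove Corollary \ref{corDimCDeltaR}: under the assumption that $R$ is locally equidimensional (and essentially of finite type over $\Bbbk$), we have $\dim C_\Delta R = \mathrm{ht}\:\mathfrak{p}_n - n$. The plan is to combine the general dimension formula from Theorem \ref{thmDimCDeltaR}, namely $\dim C_\Delta R = \mathrm{ht}\:\mathfrak{p}_0 + \mathrm{ht}(\mathfrak{p}_n/\mathfrak{p}_0) - n$, with the fact that local equidimensionality of $R$ forces the height relation $\mathrm{ht}\:\mathfrak{p}_0 + \mathrm{ht}(\mathfrak{p}_n/\mathfrak{p}_0) = \mathrm{ht}\:\mathfrak{p}_n$. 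So the entire content of the corollary reduces to establishing this additivity of heights along the chain $\mathfrak{p}_0 \subset \mathfrak{p}_n$.

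First I would invoke Theorem \ref{thmDimCDeltaR} to write $\dim C_\Delta R = \mathrm{ht}\:\mathfrak{p}_0 + \mathrm{ht}(\mathfrak{p}_n/\mathfrak{p}_0) - n$; this holds with no equidimensionality hypothesis. The remaining task is purely a statement about the ring $R$ itself. To get $\mathrm{ht}\:\mathfrak{p}_0 + \mathrm{ht}(\mathfrak{p}_n/\mathfrak{p}_0) = \mathrm{ht}\:\mathfrak{p}_n$, I would localize at $\mathfrak{p}_n$ and work in the local ring $R_{\mathfrak{p}_n}$, where $\mathrm{ht}\:\mathfrak{p}_n = \dim R_{\mathfrak{p}_n}$, and $\mathrm{ht}(\mathfrak{p}_n/\mathfrak{p}_0)$ equals $\dim R_{\mathfrak{p}_n}/\mathfrak{p}_0 R_{\mathfrak{p}_n}$, and $\mathrm{ht}\:\mathfrak{p}_0$ is the height of $\mathfrak{p}_0 R_{\mathfrak{p}_n}$ in $R_{\mathfrak{p}_n}$. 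The desired identity is then the equidimensionality statement $\dim R_{\mathfrak{p}_n} = \mathrm{ht}(\mathfrak{p}_0 R_{\mathfrak{p}_n}) + \dim(R_{\mathfrak{p}_n}/\mathfrak{p}_0 R_{\mathfrak{p}_n})$ for a prime ideal of the local ring $R_{\mathfrak{p}_n}$.

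The key input is that a local ring which is catenary and equidimensional satisfies exactly this height-plus-coheight formula for every prime. Since $R$ is essentially of finite type over a field, it is universally catenary, hence catenary, so $R_{\mathfrak{p}_n}$ is a catenary local ring; and by hypothesis $R$ is locally equidimensional, so $R_{\mathfrak{p}_n}$ is equidimensional. For a catenary equidimensional local ring, every saturated chain from a minimal prime to the maximal ideal has the same length $\dim R_{\mathfrak{p}_n}$, and refining a saturated chain through $\mathfrak{p}_0 R_{\mathfrak{p}_n}$ yields $\mathrm{ht}(\mathfrak{p}_0 R_{\mathfrak{p}_n}) + \mathrm{ht}(\mathfrak{p}_n R_{\mathfrak{p}_n}/\mathfrak{p}_0 R_{\mathfrak{p}_n}) = \dim R_{\mathfrak{p}_n}$. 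This is a standard consequence (for example via the dimension formula in the catenary case), so I would cite the relevant statement from \cite{Ma1} or \cite{Ma2} rather than rederive it.

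The main obstacle, such as it is, lies entirely in the last step: making sure the passage from ``locally equidimensional plus catenary'' to the exact additivity $\mathrm{ht}\:\mathfrak{p}_0 + \mathrm{ht}(\mathfrak{p}_n/\mathfrak{p}_0) = \mathrm{ht}\:\mathfrak{p}_n$ is clean. One must check that $\mathrm{ht}\:\mathfrak{p}_0$ computed in $R$ agrees with $\mathrm{ht}(\mathfrak{p}_0 R_{\mathfrak{p}_n})$ computed in the localization (which holds since heights are preserved under localization at a larger prime) and that equidimensionality of $R_{\mathfrak{p}_n}$ is precisely what rules out the pathological behavior where a minimal prime below $\mathfrak{p}_0$ gives a shorter chain. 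Once the height additivity is in hand, substituting into the formula of Theorem \ref{thmDimCDeltaR} gives $\dim C_\Delta R = \mathrm{ht}\:\mathfrak{p}_n - n$ immediately, completing the proof.
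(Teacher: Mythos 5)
Your proposal is correct and follows exactly the paper's route: apply Theorem \ref{thmDimCDeltaR} and then use that $R$ catenary (being essentially of finite type over a field) together with local equidimensionality gives $\mathrm{ht}\:\mathfrak{p}_0 + \mathrm{ht}(\mathfrak{p}_n/\mathfrak{p}_0) = \mathrm{ht}\:\mathfrak{p}_n$. The paper states this height additivity in a single line, whereas you spell out the justification by localizing at $\mathfrak{p}_n$; the extra detail is accurate but the argument is the same.
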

\begin{proof}
	Since $R$ is catenary and locally equidimensional, we have $\mathrm{ht}(\mathfrak{p}_n/\mathfrak{p}_0) = \mathrm{ht}\: \mathfrak{p}_n - \mathrm{ht}\: \mathfrak{p}_0$, and the assertion follows from theorem \ref{thmDimCDeltaR}.
\end{proof}

\begin{example}
Consider the non-locally equidimensional ring $R = \Bbbk[x, y, z]/(xz, yz)$. Then $\mathrm{Spec}\: R$ is a union of an affine plane and a line in $\A_\Bbbk^3$ intersecting at one point. Consider the flag $\Delta = \big((x, y), (x, y, z)\big)$ in $R$. Geometrically, $\Delta$ is a flag consisting of the line and the intersection point. Thus $\mathrm{ht}\: (x, y) = 0$, $\mathrm{ht}\: (x, y, z) = 2$. Denote $\mathfrak{p} = (x, y)$, $\mathfrak{q} = (x, y, z)$.

Consider the ring $S = C_{(\mathfrak{q})}R = \Bbbk[[x, y, z]]/(xz, yz)$. 
Then $\mathfrak{p}S$ is a prime ideal of $S$ such that $S/\mathfrak{p}S = \Bbbk[[z]]$. Then $S_\mathfrak{p}/\mathfrak{p}S_\mathfrak{p} = \Bbbk((z))$, since the localization by $\mathfrak{p}$ makes $z$ invertible. Since $\mathrm{ht}\:\mathfrak{p} = 0$, we have $\mathfrak{p}R_\mathfrak{p} = 0$, and thus $\mathfrak{p}S_\mathfrak{p} = 0$. Therefore $S_\mathfrak{p} = \Bbbk((z))$, hence $C_{(\mathfrak{p}, \mathfrak{q})}R = \Bbbk((z))$. From this we obtain
$$\dim C_{(\mathfrak{p}, \mathfrak{q})}R = 0 = \mathrm{ht}\: \mathfrak{p} + \mathrm{ht}(\mathfrak{q}/\mathfrak{p}) - 1,$$
since $\mathrm{ht}(\mathfrak{q}/\mathfrak{p}) = 1$, and $\dim C_{(\mathfrak{p}, \mathfrak{q})}R \neq 1 = \mathrm{ht}\:\mathfrak{q} - 1$.
\end{example}

\begin{thm}
     In notation \ref{NotationGeneral}, $C_\Delta R$ is semilocal if and only if $\Delta$ is saturated.
\end{thm}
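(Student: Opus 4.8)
The plan is to reduce to the polynomial case handled in Proposition \ref{propSemilocalPolynom} and then transport semilocality across the finite injective extension furnished by Remark \ref{remInjective}, so that no new explicit construction of infinitely many maximal ideals is required.

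First I would make the same two reductions used throughout this section. By Remark \ref{remFinGenAlg} we may assume that $R$ is a finitely generated $\Bbbk$-algebra. Since $\mathfrak{p}_0 C_\Delta R\subset \mathrm{rad}(C_\Delta R)$, the ring $C_\Delta R$ is semilocal if and only if $C_\Delta R/\mathfrak{p}_0 C_\Delta R = C_{\tilde{\Delta}}(R/\mathfrak{p}_0)$ is semilocal, where $\tilde{\Delta} = \Delta\:\mathrm{mod}\:\mathfrak{p}_0$; moreover $\Delta$ is saturated if and only if $\tilde{\Delta}$ is. Hence we may assume that $R$ is a finitely generated integral domain over $\Bbbk$ and that $\mathfrak{p}_0 = (0)$. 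Then I would reuse the Noether normalization from the proof of Theorem \ref{thmDimCDeltaR}: choose a polynomial subring $A = \Bbbk[x_1, \ldots, x_m]\subset R$ with $R$ finite over $A$ and $\mathfrak{r}_i := \mathfrak{p}_i\cap A = (x_1, \ldots, x_{k_i})$ for $0 = k_0 < k_1 < \ldots < k_n$. Setting $\Gamma = (\mathfrak{r}_0, \ldots, \mathfrak{r}_n)$, this $\Gamma$ is precisely a flag as in Notation \ref{NotationPolynom}, we have $\Delta\mid \Gamma$, and by Remark \ref{remInjective} the induced map $C_\Gamma A\hookrightarrow C_\Delta R$ is injective and finite, hence an injective integral extension.

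The central step is to show that semilocality transfers in both directions along $C_\Gamma A\hookrightarrow C_\Delta R$. For one direction, the fiber over a maximal ideal $\mathfrak{m}\subset C_\Gamma A$ is the Artinian $\kappa(\mathfrak{m})$-algebra $C_\Delta R\otimes_{C_\Gamma A}\kappa(\mathfrak{m})$, which has only finitely many maximal ideals; since every maximal ideal of $C_\Delta R$ lies over a maximal ideal of $C_\Gamma A$, semilocality of $C_\Gamma A$ forces semilocality of $C_\Delta R$. For the converse, as the extension is injective and integral, lying-over together with the fact that a prime of $C_\Delta R$ is maximal precisely when its contraction is maximal shows that contraction maps the maximal ideals of $C_\Delta R$ onto those of $C_\Gamma A$; thus semilocality of $C_\Delta R$ forces semilocality of $C_\Gamma A$. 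Therefore $C_\Delta R$ is semilocal if and only if $C_\Gamma A$ is, which by Proposition \ref{propSemilocalPolynom} holds if and only if $\Gamma$ is saturated. Finally, since $\mathrm{ht}\:\mathfrak{p}_i = \mathrm{ht}\:\mathfrak{r}_i = k_i$ by \cite[Theorem 9.4]{Ma1} and both $R$ and $A$ are catenary and equidimensional, one has $\mathrm{ht}(\mathfrak{p}_i/\mathfrak{p}_{i-1}) = k_i - k_{i-1} = \mathrm{ht}(\mathfrak{r}_i/\mathfrak{r}_{i-1})$, so $\Gamma$ is saturated exactly when $\Delta$ is. Chaining these equivalences yields the statement.

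The main obstacle is the direction ``$C_\Delta R$ semilocal $\Rightarrow$ $C_\Gamma A$ semilocal.'' Here it is essential that Remark \ref{remInjective} provides an injective finite map for the \emph{specific} flag $\Delta$, and not merely the injection $C_\Gamma A\hookrightarrow \prod_{\Delta'\mid \Gamma} C_{\Delta'}R$ into the whole product: with only the latter one could conclude that \emph{some} factor fails to be semilocal when $\Gamma$ is not saturated, whereas we need the chosen factor $C_\Delta R$ itself to fail. With the injective integral extension in hand, the non-saturated case is settled abstractly by descent of semilocality, so that the delicate construction of infinitely many maximal ideals is confined to Proposition \ref{propSemilocalPolynom}.
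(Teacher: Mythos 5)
Your proposal is correct and follows essentially the same route as the paper: reduce to a finitely generated domain with $\mathfrak{p}_0 = (0)$, apply Noether normalization, use the injective finite map $C_\Gamma A\hookrightarrow C_\Delta R$ from remark \ref{remInjective} to transfer semilocality in both directions, and conclude via proposition \ref{propSemilocalPolynom} together with the catenary/height comparison of $\Gamma$ and $\Delta$. Your write-up merely spells out the two directions of the semilocality transfer (Artinian fibers one way, lying-over and contraction of maximal ideals the other) more explicitly than the paper's one-line appeal to surjectivity on spectra.
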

\begin{proof}
	By remark \ref{remFinGenAlg}, assume that $R$ is a finitely generated $\Bbbk$-algebra. We can assume that $R$ is an integral domain and $\mathfrak{p}_0 = (0)$ by the same reasoning as in the proof of proposition \ref{propSemilocalPolynom}.
	
	As in the proof of theorem \ref{thmDimCDeltaR}, there exists a polynomial subring $A = \Bbbk[x_0, \ldots, x_m]$ in $R$ such that $R$ is finite over $A$ and $\mathfrak{r}_i = \mathfrak{p}_i\cap A = (x_1, \ldots, x_{k_i})$ for some $0 = k_0  < k_1 <\ldots < k_n$, where we assume that $(0) = (x_1, \ldots, x_{k_0})$ for $k_0 = 0$. Moreover, $\mathrm{ht}\:\mathfrak{p}_i = \mathrm{ht}\:\mathfrak{r}_i$. Denote $\Gamma = (\mathfrak{r}_0, \ldots, \mathfrak{r}_n)$.

By remark \ref{remInjective}, the map $C_\Gamma A\rightarrow C_\Delta R$ is finite and injective,  
and $C_\Gamma A$ is an integral domain by lemma \ref{PolynomLocalFactor}. Hence the induced map on spectra  is surjective. Thus $C_\Delta R$ is semilocal if and only if $C_\Gamma A$ is semilocal. The latter is equivalent to the fact that $\Gamma$ is saturated by proposition \ref{propSemilocalPolynom}. Since $\mathrm{ht}\:\mathfrak{p}_i = \mathrm{ht}\:\mathfrak{r}_i$ and $A$ and $R$ are catenary, then $\Gamma$ is saturated if and only if $\Delta$ is saturated, from what the assertion follows.
\end{proof}


\end{document}